\newcommand{\m}{\mathcal{M}}
\newcommand{\mo}{\mathcal{M}_0}
\newcommand{\mt}{\mathcal{M}_t}
\newcommand{\tb}{\bar{t}}
\newcommand{\xb}{\bar{x}}
\newcommand{\rb}{r_B}
\newcommand{\wb}{\overline{W}}
\newcommand{\wbi}{\underline{W}}
\newcommand{\pb}{\bar{p}}
\newcommand{\qb}{\bar{q}}
\newtheorem{theorem}{Theorem}[section]
\newtheorem{lem}[theorem]{Lemma}
\newtheorem{prop}[theorem]{Proposition}
\newtheorem{cor}[theorem]{Corollary}
\theoremstyle{definition}
\theoremstyle{remark}
\numberwithin{equation}{section}
\begin{document}

\title[Volume preserving non homogeneous mean curvature flow]{Volume preserving non homogeneous mean curvature flow of convex hypersurfaces}
\author{Maria Chiara Bertini \and Carlo Sinestrari}

\date{}
\begin{abstract}
We consider a convex Euclidean hypersurface that evolves by a volume or area preserving flow with speed given by a general nonhomogeneous function of the mean curvature. For a broad class of possible speed functions, we show that any closed convex hypersurface converges to a round sphere. The proof is based on the monotonicity of the isoperimetric ratio, which allows to control the inner and outer radius of the hypersurface and to deduce uniform bounds on the curvature by maximum principle arguments.
\end{abstract}
\maketitle
\noindent {\bf MSC 2010 subject classification} 53C44, 35B40 \bigskip
\section{Introduction}
Let $\m$ be an oriented, compact  $n$-dimensional  manifold without boundary. We embed $\m$ in the Euclidean $(n+1)$-space by $F_0:\m\rightarrow \mathbb{R}^{n+1}$, and denote its image by $\mo=F_0(\m)$. We assume that $\mo$ is strictly convex.
Then we consider a family of maps $F:\m\times[0,T)\rightarrow\mathbb{R}^{n+1}$, with $F_t:=F(\cdot,t):\m\rightarrow \mathbb{R}^{n+1}$  satisfying \smallskip
\begin{equation}\label{fl}
\left\{
\begin{array}{l}
\partial_t F(x,t)=[-\phi(H(x,t))+h(t)]\nu(x,t) \medskip\\
F(x,0)=F_0(x),\\
\end{array}
\right.
\end{equation}
where:
\begin{itemize}
\item $H$ and $\nu$ denote respectively the mean curvature and the outer unit normal  vector of the evolving hypersurface $\mt:=F_t(\m)$.
 \item $\phi:[0,+\infty)\rightarrow \mathbb{R}$ is a continuous function, $C^2$ differentiable in $(0,+\infty)$ with the following properties: \medskip
 \begin{trivlist}
        \item $i)$ $\phi(0)=0$, \hspace{0.5cm} $\displaystyle \lim_{\alpha\to \infty}\phi(\alpha)=\infty$; \medskip
        \item $ii)$ $\phi'(\alpha)>0 \hspace{5mm}\forall \alpha>0$; \medskip
        \item $iii)$ $\displaystyle \lim_{\alpha \to 0}\frac{\phi'(\alpha)\alpha^2}{\phi(\alpha)}=0$, \hspace{0.5cm} $ \displaystyle\lim_{\alpha \to \infty}\frac{\phi'(\alpha)\alpha^2}{\phi(\alpha)}=\infty$; \medskip
        \item $iv)$  $ \displaystyle\lim_{\alpha\to 0}\phi'(\alpha)\alpha=0$; \medskip
        \item $v)$ $\phi''(\alpha)\alpha\geq-2\phi'(\alpha)\hspace{5mm}\forall \alpha>0$. \medskip
        \end{trivlist}
 \item The function $h(t)$ is either defined as
 \begin{equation}\label{vpr}h(t):=\frac{1}{|\mt|}\int_{\mt} \phi(H)d\mu\end{equation}
 or as
 \begin{equation}\label{apr}h(t):=\frac{\int_{\mt} H\phi(H)d\mu}{\int_{\mt} Hd\mu}.\end{equation}
\end{itemize}
The choice of $h$  is made in order to keep the volume enclosed by $\mt$ constant in case \eqref{vpr}, and in order to keep the area of $\mt$ constant in case \eqref{apr}. Flows of this form are sometimes called {\em constrained} curvature flows, while the corresponding ones without the $h(t)$ term will be referred to as {\em standard} flows. 

We will prove the following result.
\begin{theorem}\label{mt}Let  $F_0:\m\rightarrow \mathbb{R}^{n+1}$, with $n \geq 1$, be a smooth embedding of an oriented, compact   $n$-dimensional  manifold without boundary, such that $F_0(\m)$ is strictly convex. Then the flow \eqref{fl} with $h(t)$ given by \eqref{vpr} (resp. \eqref{apr}) has a unique smooth solution, which exists for any time $t\in[0,\infty)$. The solution is convex and converges smoothly, as $t\to\infty$, to a round sphere that encloses the same volume (resp. has the same area) as the initial datum $\mo$.
\end{theorem}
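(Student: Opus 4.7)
My plan is to follow the standard template for constrained curvature flows of convex hypersurfaces, with the structural hypotheses (i)--(v) arranged so that the non-homogeneity of $\phi$ does not spoil the usual maximum-principle arguments. First I parameterise $\mt$ as a radial graph over the sphere, so that \eqref{fl} becomes a quasilinear parabolic equation with a spatially-constant nonlocal forcing. Condition (ii) makes the principal symbol strictly elliptic on $\{H>0\}$, and standard theory gives a unique smooth solution on a maximal interval $[0,T)$. The tensor maximum principle applied to the Weingarten operator shows that strict convexity is preserved: the forcing $h(t)\nu$ contributes only a term of the form $h(t)\,h_i^{k}h_{kj}$ to $\partial_t h_{ij}$, which is positive semidefinite since $h(t)>0$ on a strictly convex hypersurface (because $\phi(H)>0$ for $H>0$), so it cannot destroy positivity of the second fundamental form.

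Next I establish monotonicity of the isoperimetric ratio. A direct computation shows that \eqref{vpr} conserves the enclosed volume $V(t)$, and that \eqref{apr} conserves the area $|\mt|$. In the volume-preserving case one finds
\[
\frac{d}{dt}|\mt| = -\int_{\mt} H\phi(H)\,d\mu + \frac{1}{|\mt|}\int_{\mt} H\,d\mu\int_{\mt}\phi(H)\,d\mu \le 0,
\]
by Chebyshev's inequality for the comonotone pair $H$, $\phi(H)$ (comonotone by (ii)); the symmetric computation in the area-preserving case gives $\tfrac{d}{dt}V(t)\ge 0$. Hence the scale-invariant isoperimetric ratio $|\mt|^{n+1}/V(t)^n$ is nonincreasing, and together with convexity of $\mt$ this yields uniform positive two-sided bounds on the inradius and the circumradius.

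The decisive analytic step is to upgrade these geometric bounds to pointwise uniform curvature bounds $0<c_1\le H\le c_2$. The radius bounds already control $h(t)$ (using (iii)--(iv) to rule out degeneration of $\phi$ as $H\to 0^+$ and $H\to+\infty$), and inscribed and circumscribed spheres give one-sided control of $\min H$ and $\max H$. To obtain the pointwise bounds I apply the parabolic maximum principle to the evolution equations of $\phi(H)-h(t)$ and of a suitable pinching quantity: hypothesis (v), rewritten as $(\alpha^2\phi'(\alpha))'\ge 0$, is exactly what makes the zeroth-order reaction terms in these evolutions have the correct sign, while (iii)--(iv) allow the maximum principle to close without loss at the degenerate limits. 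I expect this to be the main obstacle of the proof, since the lack of homogeneity of $\phi$ precludes the scaling arguments used for the standard mean-curvature or power-of-mean-curvature settings, and everything must be read off directly from (iii)--(v) together with the radius bounds.

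With $H$ uniformly pinched the flow is uniformly parabolic, so Krylov--Safonov and Schauder estimates yield uniform $C^{k,\alpha}$ bounds for every $k$, and the standard continuation criterion forces $T=+\infty$. For the long-time asymptotics, the isoperimetric ratio is monotone and bounded below by its spherical value, so its derivative tends to $0$ along the flow; this forces the Chebyshev defect in the formula above to tend to $0$, and the uniform lower bound on $H$ combined with the higher-order regularity then gives $\mathrm{osc}_{\mt}H\to 0$. Any subsequential smooth limit of $\mt$ is therefore a closed embedded convex hypersurface of constant mean curvature, hence a round sphere by Alexandrov's theorem, with radius determined by the conserved volume (resp.\ area). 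Interpolation against the uniform higher-order estimates finally upgrades subsequential to full smooth convergence as $t\to\infty$.
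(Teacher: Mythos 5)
Your architecture matches the paper's at a high level (monotone isoperimetric ratio via Chebyshev, two-sided radius bounds, curvature pinching, uniform parabolicity, Alexandrov at the limit), and the monotonicity computation and the endgame (derivative of a monotone bounded quantity is integrable, hence the Chebyshev defect tends to zero) are essentially the paper's Lemma~3.1 and Proposition~4.4. But the two load-bearing estimates are asserted rather than proved, and in both cases the sketch you give would not close.

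\textbf{Preservation of convexity.} You claim the tensor maximum principle applied directly to $h_{ij}$ works because the nonlocal forcing contributes $+h\,h_i^kh_{kj}\ge 0$. That term is indeed harmless, but you have ignored the gradient reaction term $\phi''\,\nabla^iH\nabla_jH$ in the evolution of the Weingarten operator. When $\phi''<0$ (which your hypotheses allow, e.g.\ $\phi(\alpha)=\alpha^k$ with $k<1$, or $\phi(\alpha)=\log(1+\alpha)$), this term is negative in the null direction and the naive tensor maximum principle fails. The paper's Proposition~3.4 circumvents this by evolving the inverse Weingarten tensor $b^i_j=(h^i_j)^{-1}$, where Schulze's inequality on the quadratic gradient terms combines with hypothesis~(v), $\phi''\alpha\ge -2\phi'$, to give the good sign $-\frac{1}{H}(2\phi'+\phi''H)(b\nabla H)^2\le 0$. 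This is exactly where (v) enters the proof, not in the curvature bounds as you suggest.

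\textbf{Curvature bounds.} You say you would apply the maximum principle to $\phi(H)-h(t)$ and to ``a suitable pinching quantity.'' Neither suggestion works here: $h(t)$ is nonlocal, so the evolution of $\phi(H)-h(t)$ does not close, and the paper explicitly disclaims pinching-based arguments --- indeed a key point of the paper is that the constrained setting lets one avoid them. What does work is Tso's trick with the support function. For the upper bound one takes $W=\phi(H)/(u-c)$ with $u$ the support function relative to a uniformly inscribed ball (Lemma~3.6, Proposition~3.7); hypothesis~(iii) at $\alpha\to\infty$ is what makes the reaction term strictly negative for large $H$. For the lower bound --- which is the genuinely novel step, needed because the flow may degenerate as $H\to 0$ and one cannot invoke porous-medium regularity as in the homogeneous case --- the paper first proves $h(t)\ge b>0$ via an Alexandrov--Fenchel estimate (Lemma~4.2), and then reverses Tso's construction, taking $W=\phi(H)/(c-u)$ with $c-u>0$ coming from a circumscribed ball (Proposition~4.3). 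Hypotheses~(iii) and~(iv) at $\alpha\to 0$ are used there to show the reaction term is dominated by $h/c\ge b/c$. This lower-bound argument depends crucially on the presence of $h(t)$ and would not go through for the unconstrained flow; your sketch does not register this asymmetry. Until these two steps are filled in, the proposal is an outline rather than a proof.
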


This theorem can be regarded as a generalization of the result in \cite{Si}, where the case $\phi(\alpha)=\alpha^k$ with $k>0$ was considered. Here we are able to treat a more general class of speeds depending on the mean curvature, where no assumption of homogeneity or convexity/concavity is made. The main assumption is the positivity of $\phi'$, which ensures the parabolicity of the problem. The additional requirements we put on $\phi$ are satisfied in most of the natural examples. For example, linear combinations of powers
$$\phi(\alpha)=\sum_{i=1}^{l}c_i\alpha^{k_i}\hspace{3mm}c_i, k_i >0$$
satisfy assumptions $i)$-$v)$. There are also easy examples with non polynomial growth: for example
$$\phi(\alpha)=\log(1+\alpha) \hspace{3mm}\text{or}\hspace{3mm} \phi(\alpha)=e^{\alpha}-1$$
 satisfy our hypotheses.

The behaviour of convex hypersurfaces evolving by geometric flows has been widely studied in the last decades, starting from the paper by Huisken \cite{Hu1} on the mean curvature flow in the standard case, where it was proved that closed convex hypersurfaces contract to a point in finite time and become spherical after rescaling. Shortly afterwards, \cite{Hu2}, Huisken proved the corresponding result in the volume-preserving case, where the convergence to a sphere takes place in infinite time and without rescaling. Since then, many authors have obtained convergence results for convex hypersurfaces under various geometric flows, both in the standard and in the constrained case, see e.g. \cite{CaSi} or \cite{Si} for references. While in the literature on geometric flows the velocity is usually assumed to be a homogeneous function of the principal curvatures, nonhomogeneous speeds of the form $\phi(H)$ have been sometimes considered in the past. We recall in particular the paper by Smoczyk \cite{Sm} where the validity of differential Harnack inequalities was studied, and the one by Alessandroni and Sinestrari \cite{AlSi} where the singular profile of mean convex solutions was investigated for a particular class of $\phi$. On the other hand, some convergence results of convex hypersurfaces for standard flows driven by nonhomogeneous speeds have been obtained in the expanding case by Chow and Tsai \cite{CT1,CT2}.

Compared with most of the previous works on constrained flows, this paper follows a different approach. Usually, in fact, see e.g. \cite{CaSi, Hu2, Mc1, Mc2}, the convergence to a spherical profile is obtained by extending, with some additional effort, the techniques of the standard case, which typically rely on the invariance or the improvement of the pinching of the principal curvatures of the hypersurface. Our proof instead, like the ones of \cite{An1b,Si}, does not employ any pinching condition and exploits the monotonicity of the isoperimetric ratio of the hypersurface under the flow, which is a peculiar property of the volume/area-preserving case. In this respect, constrained flows exhibit a better behaviour than the standard ones. We underline that, without the $h(t)$ term, general convergence results for convex hypersurfaces are only known for speeds which are $1$-homogeneous functions of the curvature, or under some dimension restrictions, see the references in \cite{Si}.

The paper is organized as follows. After recalling some notation in Section \ref{Preliminaries}, in Section \ref{Boundedness} we show the monotonicity of the isoperimetric ratio and the preservation of convexity under the flow. From this, we deduce a uniform control on the ratio between the outer and inner radius which allows us to bound the curvature from above by the maximum principle technique introduced by Tso \cite{T} and to prove long time existence. In Section \ref{Convergence}, we study the long time behaviour of the solution. Since we are not assuming any regularity of $\phi'(\alpha)$ up to $\alpha=0$, a difficulty in this step is the possible loss of uniform convexity as time goes to infinity. To handle this problem, some previous papers \cite{Sch2, CaSi, Si} employed some advanced regularity results on the equations of porous medium type, a procedure which would be difficult to generalize to the present context. We find instead another argument, which employs a variant of Tso's technique to prove directly that the curvature remains bounded away from zero. The applicability of the argument relies crucially on the presence of the $h(t)$ term, again showing a better behaviour of the constrained case.

\section{Preliminaries}\label{Preliminaries}
\subsection*{Notation}
Let $F:\m\rightarrow \mathbb{R}^{n+1}$ be an embedded hypersurface with local coordinates $(x^1,\cdots, x^n)$. We endow $\m$ with the induced metric $g=(g_{ij})$ given by
$$g_{ij}=\left(\frac{\partial F}{\partial x^i},\frac{\partial F}{\partial x^j}\right)$$ where $(\cdot,\cdot)$ is the standard Euclidean inner product. We also denote respectively by $\nabla$ and $A=(h_{ij})$ the Levi-Civita connection and the second fundamental form of $\m$, while the measure is $d\mu=\sqrt{\det g_{ij}}\, dx$. The principal curvatures are denoted by $\lambda_1,\dots,\lambda_n$, and the mean curvature by $H=\lambda_1+\dots+\lambda_n$. We say that the hypersurface is strictly convex if all $\lambda_i$'s are positive. We denote by  $\Delta=g^{ij}\nabla_i\nabla_j$ the Laplace-Beltrami operator, where $g^{-1}=(g^{ij})$ is the inverse of the metric. As ususal, we always sum on repeated indices, and we lower or lift
tensor indices via $g$, e.g. the Weingarten operator is given by $$h^i_j=h_{kj}g^{ik}.$$
Given tensors $T=(T^{i_1\dots i_s}_{j_1\dots j_r})$ and $S=(S^{i_1\dots i_s}_{j_1\dots j_r})$ on $\m$, we use brackets to denote their inner product
$$\langle T,S\rangle=T^{i_1\dots i_s}_{j_1\dots j_r}S_{i_1\dots i_s}^{j_1\dots j_r}.$$
In particular, the square of the norm is given by
$$|T|^2=T^{i_1\dots i_s}_{j_1\dots j_r}T_{i_1\dots i_s}^{j_1\dots j_r}.$$
%In the following we will use the same notation for the norm with respect to $g$ and the absolute value of a scalar quantity.
Given a point $q\in \mathbb{R}^{n+1}$, the {\em support function} of $\m$ with respect to $\xb$ is
$$u_{q}(x):=(F(x)-q,\nu(x)),$$
where $\nu(x)$ is the outer unit normal vector of $\m$ at the point $x$. The subscript $\xb$ will be omitted whenever there will be no ambiguity. 

\subsection*{Short time existence and evolution equations}
It is well known that a flow of the form \eqref{fl} without the volume preserving term is parabolic if at any point
\begin{equation}\label{par}
\frac{\partial\phi}{\partial\lambda_i}>0,\hspace{1cm}i=1,\dots,n
\end{equation}
i.e. $\phi'>0$ at any point. This is guaranteed, by condition $ii)$ on $\phi$, if $H>0$ at any point.
Parabolicity ensures the local existence and uniqueness of the solution.
The additional term $h(t)$ only depends on time and does not interfere with the parabolicity of the equation. Hence, we have the following result, see
 \cite{Hu2, HuPo,Mc2} for more details.
\begin{theorem} Let $F_0:\m\rightarrow \mathbb{R}^{n+1}$ be a smooth embedding of an oriented, compact  $n$-dimensional  manifold without boundary, such that $F_0(\m)$ is strictly mean convex. Then the flow \eqref{fl} has a unique smooth solution $\mt$ defined on a maximal time interval $[0,T)$.
\end{theorem}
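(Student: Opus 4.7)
The plan is to reduce \eqref{fl} to a (non-local) scalar quasilinear parabolic equation, establish short-time existence for a frozen $h(t)$ by standard theory, and then handle the non-local term through a Banach fixed-point argument. I will use the fact that $F_0(\m)$ is strictly mean convex and smooth, so there is a small $t_0>0$ and a tubular neighbourhood in which every candidate evolving hypersurface is a smooth normal graph over $\mo$.

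More precisely, I would first write $F(x,t)=F_0(x)+\rho(x,t)\nu_0(x)$ for a scalar function $\rho:\m\times[0,t_0)\to \mathbb{R}$ with $\rho(\cdot,0)\equiv 0$. Expressing $H$, $\nu$ and the induced metric in terms of $\rho$ and projecting \eqref{fl} onto $\nu_0$, one obtains a PDE of the schematic form
\begin{equation*}
\partial_t \rho = \mathcal{Q}(\rho,\nabla\rho,\nabla^2\rho) + \bigl\langle \nu, \nu_0\bigr\rangle^{-1} h(t),
\end{equation*}
where $\mathcal{Q}$ depends smoothly on its arguments on a suitable open set. The linearisation of $\mathcal{Q}$ at $\rho=0$ has principal symbol $\phi'(H_0)g_0^{ij}\xi_i\xi_j$; by hypothesis (ii) together with $H_0>0$, this is strictly positive, so the equation is uniformly parabolic in a cylinder $\m\times[0,t_0)$ provided $\|\rho(\cdot,t)\|_{C^2}$ stays small. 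Standard quasilinear parabolic theory (Schauder estimates in H\"older spaces, as in \cite{Hu2,HuPo,Mc2}) then gives, for any prescribed $h\in C^{\alpha/2}([0,t_0])$, a unique smooth solution $\rho_h$ on a time interval whose length depends only on $\|h\|_{C^{\alpha/2}}$ and on the geometry of $\mo$.

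To close the fixed-point argument, I define a map $\Psi:h\mapsto \tilde h$ where $\tilde h(t)$ is computed by the right-hand side of \eqref{vpr} (or \eqref{apr}) applied to the hypersurface $F_0+\rho_h\nu_0$. Since $\phi$ is $C^2$ on the positive cone and all the geometric quantities depend smoothly on $\rho$, for $t_0$ small enough the map $\Psi$ sends a suitable closed ball in $C^{\alpha/2}([0,t_0])$ into itself and is a contraction; its unique fixed point yields a smooth solution of \eqref{fl} satisfying the non-local constraint. Uniqueness follows by the same contraction argument applied to the difference of two hypothetical solutions. The main technical point is controlling how $H$ and $h$ depend on $\rho$ in H\"older norm while keeping the linearised operator uniformly parabolic, which is standard but a little tedious; once this is done, parabolicity persists as long as $H(\cdot,t)>0$, so the solution can be continued up to a maximal interval $[0,T)$ in the usual way.
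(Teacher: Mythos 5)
Your proposal is correct and follows the standard strategy — writing $\mt$ as a normal graph $F_0+\rho\,\nu_0$ over $\mo$, obtaining a quasilinear scalar parabolic equation whose uniform parabolicity near $t=0$ comes from $\phi'(H_0)>0$, and handling the non-local term $h(t)$ by a Banach fixed-point (contraction) argument in a H\"older space — which is precisely the argument the paper relies on by citing \cite{Hu2,HuPo,Mc2} rather than reproving. In other words, you have supplied the details of the same route the paper invokes.
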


In the next proposition we list the evolution equations for the main geometrical quantities associated with the flow \eqref{fl}, which can be computed similarly to \cite{Hu1}, see also \cite{AlSi,Sm}. For brevity, the dependence of the functions by space and time is omitted. We remind in particular that $\phi$ is space and time dependent, while $h$ is only time dependent.

\begin{prop}
We have the following evolution equations for the flow \eqref{fl}:
\begin{align*}
&\partial_t g_{ij} = 2(-\phi +h)h_{ij},\\
&\partial_t g^{ij} = -2(-\phi +h)h^{ij},\\
&\partial_t \nu=\nabla\phi,\\
&\partial_t d\mu = H(-\phi+h)d\mu,\\
&\partial_t h^i _j = \phi'\Delta h^i _j+\phi'' \nabla^i H\nabla_j H+\phi'|A|^2h^i _j+(\phi-h-H\phi')h^i _kh^k _j,\\
&\partial_t H = \phi'\Delta H +\phi''|\nabla H|^2+(\phi-h)|A|^2,\\
&\partial_t\phi = \phi'\Delta\phi+\phi'(\phi-h)|A|^2,\\
&\partial_t u = \phi'\Delta u+\phi'|A|^2u-\phi-\phi'H+h.
\end{align*}
\end{prop}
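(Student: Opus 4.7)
The plan is to derive all the formulas by direct computation from the velocity equation $\partial_t F=V\nu$ with $V(x,t)=-\phi(H(x,t))+h(t)$, using nothing beyond the Gauss and Weingarten relations $\partial_i\partial_j F=\Gamma^k_{ij}\partial_k F-h_{ij}\nu$ and $\partial_i\nu=h_i^k\partial_k F$, the Codazzi equation $\nabla_i h_{jk}=\nabla_k h_{ij}$ and Simons' identity $\nabla_i\nabla_j H=\Delta h_{ij}+|A|^2 h_{ij}-H h_{ik}h^k_j$, which hold in $\mathbb R^{n+1}$. Since $h$ depends only on $t$, at every step the constraint term merely replaces $\phi$ by $\phi-h$ whenever it is multiplied by a curvature quantity; no extra spatial derivatives are produced.

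First I would compute $\partial_t g_{ij}$ by differentiating $g_{ij}=(\partial_iF,\partial_jF)$, using $\partial_i(V\nu)=(\partial_iV)\nu+Vh_i^k\partial_kF$ and the orthogonality $\nu\perp\partial_iF$; this gives the first line, and the second follows from $\partial_t(g^{ik}g_{kj})=0$. The normal evolution is obtained by differentiating $(\nu,\nu)=1$ and $(\partial_iF,\nu)=0$, yielding $\partial_t\nu=-\nabla V=\nabla\phi$ (the spatial gradient of $h$ vanishes). The volume form evolution is immediate from $\partial_t\sqrt{\det g}=\tfrac12 g^{ij}\partial_tg_{ij}\sqrt{\det g}$.

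Next I would establish the evolution of $h_{ij}$ starting from $h_{ij}=-(\partial_i\partial_jF,\nu)$, interchanging $\partial_t$ with $\partial_i\partial_j$ and using $\partial_i\partial_j(V\nu)$ together with the previous formula for $\partial_t\nu$; the standard manipulation produces $\partial_t h_{ij}=-\nabla_i\nabla_jV+V\,h_{ik}h^k_j$. Expanding $\nabla_i\nabla_j\phi(H)=\phi''\nabla_iH\nabla_jH+\phi'\nabla_i\nabla_jH$ and raising an index via $\partial_t g^{ij}$ converts the term $\phi'\nabla^i\nabla_jH$ into $\phi'\Delta h^i_j+\phi'|A|^2h^i_j-\phi'Hh^i_kh^k_j$ through Simons, and combining the $h_{ik}h^k_j$ coefficients gives the stated evolution of $h^i_j$. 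Tracing in $i,j$ yields $\partial_tH$; from there $\partial_t\phi=\phi'\partial_tH$ together with the chain rule identity $\Delta\phi(H)=\phi'\Delta H+\phi''|\nabla H|^2$ gives the equation for $\phi$.

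The step I expect to be the trickiest, because of its more global character, is the evolution of the support function $u=(F-q,\nu)$. I would compute $\partial_t u=V+(F-q,\partial_t\nu)=-\phi+h+\phi'\langle(F-q)^T,\nabla H\rangle$, where $(F-q)^T$ is the tangential part of $F-q$. To match the claimed expression I need the auxiliary identity
\[
\Delta u=\langle(F-q)^T,\nabla H\rangle+H-|A|^2u,
\]
which I would prove by writing $\nabla_iu=h_i^k\xi_k$ with $\xi_k=(F-q,\partial_kF)$, then computing $\nabla_i\xi_k=g_{ik}-h_{ik}u$ from the Gauss formula and contracting, with Codazzi converting $g^{ij}\nabla_ih_{jl}$ into $\nabla_lH$. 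Substituting this identity into the expression for $\partial_tu$ and collecting terms produces exactly $\phi'\Delta u+\phi'|A|^2u-\phi-\phi'H+h$, completing the proposition.
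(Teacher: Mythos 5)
Your computation is correct and is exactly the direct derivation (via Gauss--Weingarten, Codazzi, and Simons' identity) that the paper itself points to by citing Huisken \cite{Hu1} and Smoczyk \cite{Sm} rather than reproducing it. The sign conventions you use ($h_{ij}=-(\partial_i\partial_j F,\nu)$ with outer normal $\nu$, Weingarten $\partial_i\nu=h_i^k\partial_k F$) are consistent with the paper's, the passage from $\partial_t h_{ij}=-\nabla_i\nabla_j V+Vh_{ik}h^k_j$ to the stated $\partial_t h^i_j$ correctly accounts for the $\partial_t g^{ik}$ term, and the auxiliary identity $\Delta u=\langle(F-q)^T,\nabla H\rangle+H-|A|^2u$ is proved cleanly and is indeed the nontrivial ingredient for the support-function equation; so this is the same route, just written out.
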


\section{Boundedness of the velocity and long time existence}\label{Boundedness}

\subsection*{Some geometrical bounds}
An important feature of the volume/area-preserving flows we are considering is that the isoperimetric ratio of the hypersurface is non increasing in time, a property which was observed by M. Gage \cite{G} for the area preserving mean curvature flow. We recall that if $\Omega\subset\mathbb{R}^{n+1}$ is a compact, convex set with nonempty interior, the isoperimetric ratio is given by
$$\mathcal{I}(\Omega)=\frac{|\partial\Omega|^{n+1}}{|\Omega|^n},$$
where $|\Omega|$ is the $(n+1)$-dimensional measure of $\Omega$ and $|\partial\Omega|$ is the $n$-dimensional measure of its boundary $\partial\Omega$. Powers are chosen in order to make $\mathcal{I}(\Omega)$ invariant by homotheties.
 It is well known that the isoperimetric ratio satisfies the isoperimetric inequality
\begin{equation}\label{dis}\mathcal{I}(\Omega)\geq(n+1)^n\omega_n,
\end{equation}
where $\omega_n=|\mathbb{S}^n|$. Denote by $\Omega_t$ the region enclosed by $\mt$.

\begin{lem}\label{l1} For the flow \eqref{fl} we have
$$\frac{d}{dt}|\mt|\leq 0\hspace{2cm}\text{in case of $h$ given by \eqref{vpr},}$$
$$\frac{d}{dt}|\Omega_t|\geq0\hspace{2cm}\text{in case of $h$ given by \eqref{apr}.}$$
\end{lem}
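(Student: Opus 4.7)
The plan is to differentiate the two functionals directly, using the evolution $\partial_t d\mu = H(-\phi + h)\, d\mu$ from the previous proposition together with the first variation formula
\[
\frac{d}{dt}|\Omega_t| = \int_{\mt}\bigl(h-\phi(H)\bigr)\, d\mu,
\]
valid because $\mt$ moves with outward normal velocity $h-\phi$. In the volume-preserving case \eqref{vpr} the definition of $h$ makes this integral vanish identically (so $|\Omega_t|$ is indeed constant), whereas a short calculation gives
\[
\frac{d}{dt}|\mt| = \int_{\mt} H(h-\phi)\, d\mu = \frac{1}{|\mt|}\int_{\mt}\phi\, d\mu \int_{\mt} H\, d\mu - \int_{\mt} H\phi\, d\mu.
\]
In the area-preserving case \eqref{apr} the roles swap: the new $h$ forces $\frac{d}{dt}|\mt|=0$, and an analogous substitution expresses $\frac{d}{dt}|\Omega_t|$ as a positive multiple of $|\mt|\int H\phi\,d\mu - \int H\,d\mu\int \phi\,d\mu$.

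Thus both inequalities of the lemma reduce to the single Chebyshev-type estimate
\[
|\mt|\int_{\mt} H\,\phi(H)\, d\mu \;\geq\; \int_{\mt} H\, d\mu \cdot \int_{\mt}\phi(H)\, d\mu.
\]
Since hypothesis $(ii)$ provides $\phi'>0$, the functions $H$ and $\phi(H)$ are monotone in the same variable and hence comonotone on $\mt$. I would obtain the estimate from the manifestly nonnegative double integral
\[
\int_{\mt}\!\int_{\mt}\bigl(H(x)-H(y)\bigr)\bigl(\phi(H(x))-\phi(H(y))\bigr)\, d\mu(x)\, d\mu(y) \geq 0,
\]
whose expansion is precisely twice the difference between the two sides above.

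I do not expect any real obstacle here: everything follows from the basic first-variation formulas plus a one-line monotonicity argument. The only feature of $\phi$ that enters is $\phi'>0$, reflecting the classical principle (going back to Gage \cite{G}) that the isoperimetric ratio is monotone along volume- or area-preserving flows driven by an increasing function of the mean curvature.
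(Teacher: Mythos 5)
Your proof is correct. The paper reaches the same conclusion but organizes the monotonicity argument slightly differently: instead of the symmetric double integral, it introduces an auxiliary value $\bar H$ (chosen so that $\phi(\bar H)$ equals the mean of $\phi(H)$ in the volume-preserving case, and so that $\bar H$ equals the mean of $H$ in the area-preserving case), rewrites the derivative as $\int_{\mt}(H-\bar H)(\phi(\bar H)-\phi(H))\,d\mu$, and observes the integrand has a fixed sign pointwise. Your route has the small advantage of reducing both cases to the single Chebyshev inequality $|\mt|\int H\phi\,d\mu \ge \int H\,d\mu\int\phi\,d\mu$, proved once and for all by the nonnegative double integral; the paper's route treats the two cases in parallel with two choices of $\bar H$. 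The underlying mechanism (comonotonicity of $H$ and $\phi(H)$ from $\phi'>0$) is identical, so this is a cleaner packaging of essentially the same idea rather than a different proof. One minor slip worth noting: you say that in the area-preserving case ``the new $h$ forces $\frac{d}{dt}|\mt|=0$'' — that is what the choice \eqref{apr} is designed to do, and it does hold, but it is not needed for the inequality you are proving; the only first-variation fact you actually use in that case is $\frac{d}{dt}|\Omega_t|=\int(h-\phi)\,d\mu$.
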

\begin{proof}
We start from the volume preserving case. For any $t$, let us denote by $\bar{H}=\bar{H}(t)$ the value such that
$\phi(\bar{H})=\frac{1}{|\mt|}\int_{\mt} \phi$, which is uniquely defined by the monotonicity of $\phi$. Then $\int_{\mt}[\phi(\bar{H})-\phi(H)]=0$, and so \begin{align*}
\frac{d}{dt}|\mt| &=  \int_{\mt} H(-\phi(H)+h) \, d\mu =
\int_{\mt}[H\phi(\bar{H})-H\phi(H)] \, d\mu\\
&=\int_{\mt}[H-\bar{H}][\phi(\bar{H})-\phi(H)] \, d\mu \\
&= \int_{H\geq\bar{H}}[H-\bar{H}][\phi(\bar{H})-\phi(H)] \, d\mu+\int_{H\leq\bar{H}}[H-\bar{H}][\phi(\bar{H})-\phi(H)] \, d\mu.
\end{align*}
Since both terms on the right side are nonpositive, the assertion follows.\\
Analogously, for the area preserving flow, let $\bar{H}=\frac{1}{|\mt|}\int_{\mt}H$. We have
\begin{align*}
\frac{d}{dt}|\Omega_t| &=  \int_{\mt}[-\phi(H) + h]  \, d\mu \\
&= \frac{|\mt|}{\int_{\mt}H \, d\mu}\left(-\int_{\mt}\phi(H)\bar{H} \, d\mu+\int_{\mt}\phi(H)H \, d\mu \right) \\
&= \frac{|\mt|}{\int_{\mt}H \, d\mu}\int_{\mt}[\phi(H)-\phi(\bar{H})][H-\bar{H}] \, d\mu,
\end{align*}
which is nonnegative by an argument similar to the previous case.
\end{proof}
From Lemma \ref{l1} and the isoperimetric inequality \eqref{dis}, we deduce the following.
\begin{cor}\label{al}For the flow \eqref{fl} with $h$ given either by \eqref{vpr} or \eqref{apr} there exist constants $M_1, M_2, V_1,V_2 >0$ depending only on $\mo, \Omega_0$ and $n$ such that
$$M_1\leq |\mt|\leq M_2,\hspace{1.5cm}V_1\leq |\Omega_t|\leq V_2.$$
\end{cor}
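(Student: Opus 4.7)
The proof is essentially a direct bookkeeping that combines Lemma \ref{l1} with the isoperimetric inequality \eqref{dis}, using in addition that $h$ is built precisely to conserve one of the two quantities.

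The plan is to treat the two choices of $h$ separately. First I would recall that the definition \eqref{vpr} of $h$ makes the enclosed volume constant, so in that case $|\Omega_t|=|\Omega_0|$ for every $t$, and one can simply take $V_1=V_2=|\Omega_0|$. Lemma \ref{l1} then provides the upper bound $|\mt|\le|\mo|=:M_2$, while the isoperimetric inequality \eqref{dis} applied to $\Omega_t$ gives
\[
|\mt|^{n+1} \ge (n+1)^{n}\omega_n\,|\Omega_t|^{n}=(n+1)^{n}\omega_n\,|\Omega_0|^{n},
\]
so one may take $M_1:=\bigl((n+1)^{n}\omega_n\bigr)^{1/(n+1)}|\Omega_0|^{n/(n+1)}>0$.

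For the second choice \eqref{apr}, the definition of $h$ forces $|\mt|=|\mo|$ constantly, so $M_1=M_2=|\mo|$. Lemma \ref{l1} now gives the lower bound $|\Omega_t|\ge|\Omega_0|=:V_1$, while the isoperimetric inequality \eqref{dis}, used in the form
\[
|\Omega_t|^{n} \le \frac{|\mt|^{n+1}}{(n+1)^{n}\omega_n}=\frac{|\mo|^{n+1}}{(n+1)^{n}\omega_n},
\]
supplies $V_2:=\bigl((n+1)^{n}\omega_n\bigr)^{-1/n}|\mo|^{(n+1)/n}$.

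All four constants depend only on $n$, $|\mo|$ and $|\Omega_0|$, so there is no real obstacle. The only point worth stressing is that in each case only one of the two geometric quantities is conserved by construction; the control on the other one requires the monotonicity statement of Lemma \ref{l1} combined with \eqref{dis}, which is exactly the content of the observation that in the constrained setting the isoperimetric ratio is monotone non-increasing.
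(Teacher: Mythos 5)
Your proof is correct and is exactly the bookkeeping the paper has in mind: in each of the two cases one of $|\mt|$, $|\Omega_t|$ is conserved by construction, the other is one-sidedly bounded via Lemma \ref{l1}, and the remaining bound comes from the isoperimetric inequality \eqref{dis}. The paper simply states the corollary without writing this out, and your argument fills in the obvious steps in the same way.
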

%\begin{proof}
%From Lemma \ref{l1} and \eqref{dis} it follows that, in case of $h$ given by \eqref{vpr},
%$$|\mo|\geq|\mt|=\mathcal{I}(\Omega_t)^{\frac{1}{n+1}}|\Omega_0|^{\frac{n}{n+1}}\geq[(n+1)^n\omega_n|\Omega_0|^n]^{\frac{1}{n+1}}.$$
%The area preserving case is analogous.
%\end{proof}

The bound on $\mathcal{I}(\Omega_t)$ given by Lemma  \ref{l1} also allows us to control the shape of the hypersurface in terms of the inner ad outer radii. We recall that if $\Omega\subset\mathbb{R}^{n+1}$ is a compact, convex set with nonempty interior, its inner [resp. outer] radius  is the radius of the biggest $(n+1)$-dimensional sphere contained in $\Omega$ [resp. the smallest $(n+1)$-dimensional sphere that contains $\Omega$]. We indicate inner and outer radii respectively by  $R_-(\Omega)$ and $R^+(\Omega)$.
 We need the following property (see \cite[Proposition 5.1]{An1b} or \cite[Lemma 4.4]{HuSi}).
\begin{prop}\label{p1}
For any $n\geq1$ and $c_1>0$ there exist $c_2=c(c_1,n)$ with the following property. Let $\Omega\subset\mathbb{R}^n$ be a compact, convex set with non empty interior such that $\mathcal{I}(\Omega)\leq c_1$. Then $\Omega$ satisfies
$$\frac{R^+(\Omega)}{R_-(\Omega)}\leq c_2.$$
\end{prop}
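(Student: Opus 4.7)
The plan is to invoke John's theorem in order to sandwich $\Omega$ between an ellipsoid and its dilate, and then to show that, for an ellipsoid, the isoperimetric ratio controls the ratio of its largest to smallest semi-axis.

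First, I would apply John's theorem to $\Omega$: there exist a point $c\in\mathbb{R}^n$ and an ellipsoid $E$ centred at $c$, with semi-axes $r_1\le r_2\le\cdots\le r_n$, such that $E\subset\Omega\subset\widetilde{E}$, where $\widetilde{E}:=c+n(E-c)$ is the dilate of $E$ by a factor $n$ about $c$. From $B(c,r_1)\subset E\subset\Omega$ one reads off $R_-(\Omega)\ge r_1$, and from $\Omega\subset\widetilde{E}\subset B(c,nr_n)$ one reads off $R^+(\Omega)\le nr_n$. Hence it is enough to bound $r_n/r_1$ in terms of $c_1$ and $n$.

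Second, the sandwich $E\subset\Omega\subset\widetilde{E}$, together with the monotonicity of both $n$-dimensional Lebesgue measure and $(n-1)$-dimensional boundary measure under convex inclusion, gives
\[
|E|\le|\Omega|\le n^n|E|,\qquad |\partial E|\le|\partial\Omega|\le n^{n-1}|\partial E|.
\]
Therefore $\mathcal I(\Omega)$ and $\mathcal I(E)$ differ by a multiplicative constant depending only on $n$, so the hypothesis $\mathcal I(\Omega)\le c_1$ yields $\mathcal I(E)\le C(n)\,c_1$.

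The heart of the argument is a lower bound on $\mathcal I(E)$ in terms of the eccentricity $r_n/r_1$. I would project $E$ orthogonally onto the hyperplane perpendicular to its \emph{shortest} semi-axis: the image is the $(n-1)$-dimensional ellipsoid with semi-axes $r_2,\ldots,r_n$ and $(n-1)$-measure $\omega_{n-1}\,r_2\cdots r_n$. By Cauchy's projection formula, the area of any hyperplane projection of a convex body is bounded above by half its surface area, so
\[
|\partial E|\ge 2\,\omega_{n-1}\,r_2\cdots r_n.
\]
Combined with $|E|=\omega_n\,r_1\cdots r_n$, this produces
\[
\mathcal I(E)=\frac{|\partial E|^n}{|E|^{n-1}}\ge c_n\,\frac{r_2\cdots r_n}{r_1^{n-1}}\ge c_n\,\frac{r_n}{r_1},
\]
the last step using only $r_i\ge r_1$ for $i\ge 2$. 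Combining with the bound from the second step gives $r_n/r_1\le C'(n,c_1)$, and hence $R^+(\Omega)/R_-(\Omega)\le c_2$.

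The step I expect to be most delicate is the last one: one must take care to project in the direction of the \emph{shortest} semi-axis rather than the longest, since this is what allows the isoperimetric ratio to detect the elongation of $E$. A more direct attack working on $\Omega$ itself rather than on its John ellipsoid tends to run into trouble, because the natural bounds on $|\Omega|$ and $|\partial\Omega|$ in terms of $R_-$ and the diameter turn out not to be sharp enough to close the argument when $n\ge 3$.
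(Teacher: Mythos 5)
The paper does not give its own proof of this proposition: it cites \cite[Prop.~5.1]{An1b} and \cite[Lemma~4.4]{HuSi}. Your John--ellipsoid argument is correct and complete, and is a genuinely different route. The sandwich $E\subset\Omega\subset c+n(E-c)$ gives $R_-(\Omega)\geq r_1$, $R^+(\Omega)\leq nr_n$, and (via monotonicity of volume and perimeter under convex inclusion, plus the scaling of both under dilation) $\mathcal I(E)\leq n^{n(n-1)}\mathcal I(\Omega)$; and the lower bound $\mathcal I(E)\geq c_n\,r_n/r_1$ follows cleanly from the shadow in the short-axis direction, $|\partial E|\geq 2\omega_{n-1}r_2\cdots r_n$, $|E|=\omega_n r_1\cdots r_n$, and $r_2\cdots r_n\geq r_1^{n-2}r_n$. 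The cited references use a more elementary direct argument that does not invoke John's theorem: pick a direction $u$ of \emph{minimal width} $w$ of $\Omega$ (so $w$ is comparable to $R_-$, e.g.\ by Steinhagen's inequality), bound $|\Omega|\leq w\,|\Sigma|$ by Fubini where $\Sigma$ is the shadow of $\Omega$ in direction $u$, bound $|\partial\Omega|\geq 2|\Sigma|$ by the same two-to-one projection you use for $E$, and observe that $\Sigma$ contains the convex hull of an $(n-1)$-disk of radius $R_-$ and a far point, so $|\Sigma|\gtrsim R_-^{n-2}\,\mathrm{diam}(\Omega)$; this already yields $\mathcal I(\Omega)\gtrsim \mathrm{diam}(\Omega)/R_-\gtrsim R^+/R_-$. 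So your closing remark that a direct attack on $\Omega$ ``runs into trouble when $n\geq 3$'' is not actually correct; the ingredient you were missing is to take $u$ to be a minimal-width direction rather than an arbitrary one. Your John-ellipsoid version sidesteps that choice because the ellipsoid's axes are handed to you, which is a pleasant simplification, at the price of invoking a less elementary input. Either way, the proposition is established.
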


\subsection*{Preservation of convexity}
Now we show that, if the initial datum $\mo$ is strictly convex, then strict convexity is preserved for all time such that the flow is defined.
\begin{prop}\label{conv}If $\mo$ is strictly convex, then $\mt$ is strictly convex for all $t\in[0,T)$.
\end{prop}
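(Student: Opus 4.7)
The plan is to apply the tensor maximum principle to the Weingarten operator $h^i_j$ via the evolution equation listed in the previous proposition. A key observation is that the $h(t)$-dependent contribution $-h(t)\,h^i_k h^k_j$ is quadratic in $h$ and therefore vanishes on any null eigenvector of $h^i_j$, so the convexity-preservation argument reduces to that of the standard flow $\partial_t F=-\phi(H)\nu$.

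Suppose, for contradiction, that there exists a first $t_0\in(0,T)$ and a point $p_0\in\m$ at which the smallest principal curvature $\lambda_{\min}$ vanishes. Fix a unit null eigenvector $v$ of $h^i_j$ at $(p_0,t_0)$, extend it by parallel transport near $p_0$, and consider
\[
f(p,t)\,:=\,h^i_j(p,t)\,v_i(p)\,v^j(p).
\]
Since $f\geq \lambda_{\min}$ throughout, with equality at $(p_0,t_0)$, the function $f$ attains a space-time minimum there; thus $\partial_t f\leq 0$, $\nabla f=0$, and $\nabla^2 f\geq 0$. Contracting the evolution equation for $h^i_j$ with $v_i v^j$, the two source terms $\phi'|A|^2 h^i_j$ and $(\phi-h-H\phi')h^i_k h^k_j$ both vanish at the critical point, leaving
\[
\partial_t f \,=\, \phi'(H)\,\Delta f \,+\, \phi''(H)\,(\nabla_v H)^2 \qquad\text{at }(p_0,t_0).
\]

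When $\phi''(H)\geq 0$ the contradiction is immediate from $\phi'(H)>0$ (property (ii)). The substantive case is $\phi''(H)<0$, where I invoke condition (v): the rearrangement $\phi''(H)\,H\geq -2\phi'(H)$ gives $\phi''(\nabla_v H)^2\geq -2\phi'(\nabla_v H)^2/H$, so the task reduces to the pointwise bound
\[
\Delta f \,\geq\, \tfrac{2(\nabla_v H)^2}{H} \qquad\text{at }(p_0,t_0).
\]
I expect this last inequality to be the main technical obstacle. My strategy is as follows: Simons' identity combined with $h^i_j v^j=0$ reduces $\Delta f$ at the critical point to $\nabla_v^2 H$; Codazzi's total symmetry of $\nabla h$ together with $\nabla f=0$ forces $\nabla_v h_{v k}=0$ for every $k$; and the full positive semidefiniteness of $\nabla^2 f$, beyond merely $\Delta f\geq 0$, should combine with a Cauchy--Schwarz estimate on $\nabla_v H=\sum_k \nabla_k h_{v k}$ to deliver the required inequality, with condition (v) tuned to supply exactly the sharp constant. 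A subsidiary check is $H(p_0,t_0)>0$, so that the formulas above are well-defined: a smooth compact convex hypersurface cannot be flat at any single point, and the vanishing rates in (iii)--(iv) handle any limiting concerns.

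The above contradiction rules out $\lambda_{\min}$ ever reaching zero, establishing strict convexity throughout $[0,T)$; the weak inequality $\partial_t f\leq 0$ is upgraded to the strict one needed to close the argument by a standard perturbation $f\mapsto f+\varepsilon t$, $\varepsilon\to 0^+$.
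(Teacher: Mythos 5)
Your strategy---applying the tensor maximum principle directly to $h^i_j$ and deriving a contradiction at the first time the smallest principal curvature touches zero---is genuinely different from the paper's, which instead runs a maximum principle on the inverse Weingarten map $b^i_j=(h^i_j)^{-1}$ over a fixed compact time interval $[0,T^*]$. You correctly compute that both zeroth-order reaction terms $\phi'|A|^2 h^i_j$ and $(\phi-h-H\phi')h^i_kh^k_j$ vanish on a null eigenvector, and you correctly isolate the real difficulty: the gradient term $\phi''(\nabla_v H)^2$ has the wrong sign precisely when $\phi''<0$, which condition $(v)$ permits. Unfortunately, the inequality you need to close the argument,
\[
\Delta f \;\geq\; \frac{2(\nabla_v H)^2}{H}\qquad\text{at }(p_0,t_0),
\]
is not established, and the tools you invoke do not produce it. The condition $\nabla f=0$ yields (via Codazzi) $\nabla_k h_{vv}=\nabla_v h_{vk}=0$ for every $k$, but this leaves the remaining components $\nabla_v h_{kl}$ with $k,l\neq v$ completely unconstrained, and it is exactly these that make up $\nabla_v H=\sum_{k}\nabla_v h_{kk}$. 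The positive semidefiniteness of $\nabla^2 f$ translates, after the Simons commutation (and the vanishing of the $R\!*\!h$ terms at the null point), into positive semidefiniteness of $(k,l)\mapsto\nabla^2_{vv}h_{kl}$, whose trace gives $\nabla^2_{vv}H\ge 0$---but there is no route from a PSD Hessian in $(k,l)$ to a quadratic lower bound $\nabla^2_{vv}H \gtrsim(\nabla_v H)^2/H$, since the first derivative $\nabla_v H$ is independent of the data controlled by the touching-point conditions. Note that your target inequality is equivalent to concavity of $H^{-1}$ in the $v$-direction at that point, which is not a consequence of the maximum principle.

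The paper avoids this entire issue by working with $b^i_j$. The key input is Schulze's gradient inequality
$-H\,b^r_m\nabla_l h^m_q\nabla^l h^p_r\le -\nabla^p H\,\nabla_q H$,
which \emph{requires} strict positivity of the Weingarten map (it is essentially Cauchy--Schwarz weighted by $1/\lambda_r$) and therefore degenerates precisely at the touching point your argument is built around. By formulating the problem as an upper bound on $b^i_j$, the relevant maximum of $b$ occurs where $h^i_j$ is still positive definite, so Schulze's inequality is legitimately available, and together with condition $(v)$ (in the exact form $2\phi'+\phi''H\ge 0$) it makes the gradient contribution to $\partial_t b^i_j$ nonpositive. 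The reaction term $(\phi'H-\phi+h)\delta^i_j$ is then bounded using a crude upper bound $H<H^*$ on $[0,T^*]$, and the scalar maximum principle gives $b$ bounded on $[0,T^*]$, hence strict convexity of $\mathcal M_{T^*}$. The finite-time-interval formulation is essential: it replaces the first-touch contradiction, which cannot be made to work because the needed gradient estimate is unavailable at the degenerate point, with an a priori bound established away from degeneracy. You should also be more careful with the claim that ``a smooth compact convex hypersurface cannot be flat at any single point'': this is false as stated (weakly convex hypersurfaces may have flat regions), and showing $H(p_0,t_0)>0$ at the first touch needs an argument, not an appeal to $(iii)$--$(iv)$.
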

\begin{proof}
It is sufficient to show that, if $\mt$ is strictly convex on $[0,T^*)$, for an arbitrary $T^*<T$, then $\mathcal{M}_{T^*}$ is strictly convex. On the interval $[0,T^*)$, let $b^i_j:=(h^i_j)^{-1}$ the inverse of the Weingarten operator. By standard computations, we get the evolution equation of $b^i_j$:
\begin{align*}
\partial_t b^i_j&=\phi'\Delta b^i_j-2\phi'h^m_n\nabla_{l}b^n_j\nabla^{l}b^i_m-\phi''(b^i_m\nabla^m H)(b^n_j\nabla_n H)\\
&\hspace{0.5cm}-\phi'|A|^2b^i_j+(\phi' H-\phi +h)\delta^i_j.
\end{align*}
In order to prove that gradient terms give a negative contribution, we
rewrite the gradients of $b^i_j$ in terms of gradients of $h^i_j$:
\begin{equation*}\label{gb}
h^m_n\nabla_lb^n_j\nabla^lb^i_m=b^q_jb^i_pb^r_m\nabla_lh^m_q\nabla^lh^p_r.
\end{equation*}
Then we use the following inequality proved by Schulze (see the second-last formula in the proof of Lemma 2.5 of \cite{Sh}
with $k=1$)
$$-Hb^r_m\nabla_lh^m_q\nabla^lh^p_r\leq-\nabla^pH\nabla_qH,$$ 
which gives
\begin{equation}\label{cv}
\begin{split}
-&\phi'2h^m_n\nabla_{l}b^n_j \nabla^{l}b^i_m-\phi''(b^i_m\nabla^m H)(b^n_j\nabla_n H)\\
&\hspace{5mm}\leq-\frac{1}{H}(2\phi'+\phi''H)(b^i_m\nabla^mH)(b^n_j\nabla_nH)\leq0,
\end{split}
\end{equation}
where for the last inequality we used property $v)$ of $\phi$.
Furthermore, since $[0,T^*)$ is strictly contained in the existence time interval of the solution, there exists $H^*>0$ such that $0<H<H^*$ on $[0,T^*)$. Such a bound on $H$ also implies a bound from above on $\phi' H + h$, thanks also to property iv) of $\phi$. Then, using \eqref{cv} we obtain
$$\partial_t b^i_j\leq\phi'\Delta b^i_j-\phi'|A|^2 b^i_j+c_0,$$
where $c_0$ is a constant only depending on $n,\mo$ and $H^*$.

So, using the maximum principle, $b^i_j$ is bounded on the finite interval $[0,T^*]$, and then on such interval all principal curvatures stay bounded from below by a positive constant. Then, $\mathcal{M}_{T^*}$ is strictly convex and the assertion follows.
\end{proof}

Once we know that our solution remains convex, we can apply the results of the previous subsection. Let  us set $R_-(t):= R_-(\Omega_t)$ and $R^+(t):=R^+(\Omega_t)$.
\begin{cor}\label{c1} For a convex $\mt$ evolving by \eqref{fl}, there are positive constants $R_-$ and $R^+$ such that
$$R_-<R_-(t)\leq R^+(t)<R^+,$$
where $R_-$ and $R^+$ depend only on $n$, $|\mo|$ and $|\Omega_0|$.
\end{cor}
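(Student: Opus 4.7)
The plan is to combine the monotonicity of the isoperimetric ratio established in Lemma~\ref{l1} with the shape estimate of Proposition~\ref{p1} to bound the ratio $R^+(t)/R_-(t)$ by a constant depending only on the initial data, and then to turn this shape bound into two-sided pointwise bounds on the radii using the uniform volume/area bounds of Corollary~\ref{al}.

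First I would observe that the isoperimetric ratio $\mathcal{I}(\Omega_t)=|\mt|^{n+1}/|\Omega_t|^n$ is non-increasing along the flow in both cases considered. In the volume preserving case the denominator is constant by construction while $|\mt|$ is non-increasing by Lemma~\ref{l1}; in the area preserving case the numerator is constant while $|\Omega_t|$ is non-decreasing. Hence $\mathcal{I}(\Omega_t)\leq \mathcal{I}(\Omega_0)$, a quantity depending only on $|\mo|$ and $|\Omega_0|$. Since $\mt$ is convex with nonempty interior by Proposition~\ref{conv}, Proposition~\ref{p1} applies and produces a constant $c_2=c_2(n,|\mo|,|\Omega_0|)$ such that
\[
R^+(t)\leq c_2\, R_-(t) \qquad \text{for every } t\in[0,T).
\]

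Next I would convert this shape estimate into absolute radius bounds. The inclusions of the inscribed and circumscribed balls give $B_{R_-(t)}\subset \Omega_t \subset B_{R^+(t)}$ (after suitable translation), so there is a dimensional constant $\kappa_n$ with
\[
\kappa_n \bigl(R_-(t)\bigr)^{n+1}\leq |\Omega_t|\leq \kappa_n \bigl(R^+(t)\bigr)^{n+1}.
\]
Feeding in the two-sided bound $V_1\leq |\Omega_t|\leq V_2$ from Corollary~\ref{al}, the right inequality combined with $V_1\leq|\Omega_t|$ yields $R^+(t)\geq (V_1/\kappa_n)^{1/(n+1)}$, so that $R_-(t)\geq c_2^{-1}(V_1/\kappa_n)^{1/(n+1)}>0$; symmetrically, the left inequality combined with $|\Omega_t|\leq V_2$ yields $R_-(t)\leq (V_2/\kappa_n)^{1/(n+1)}$, whence $R^+(t)\leq c_2(V_2/\kappa_n)^{1/(n+1)}$.

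There is no genuine obstacle here; the corollary is essentially a packaging of Lemma~\ref{l1}, Proposition~\ref{conv}, Corollary~\ref{al}, and Proposition~\ref{p1}. The only point that deserves a line of care is verifying that each constant introduced depends solely on $n$, $|\mo|$ and $|\Omega_0|$: this is immediate since $\mathcal{I}(\Omega_0)$ is determined by $|\mo|$ and $|\Omega_0|$, and both $V_1$ and $V_2$ in Corollary~\ref{al} can be taken of this form (in each case one of $|\mt|$, $|\Omega_t|$ is conserved, while the other is controlled on one side by monotonicity and on the other by the isoperimetric inequality~\eqref{dis}).
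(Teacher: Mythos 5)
Your proof is correct and follows essentially the same route as the paper: bound the isoperimetric ratio by Lemma~\ref{l1}, invoke Proposition~\ref{p1} to control $R^+(t)/R_-(t)$, and then combine the ball-volume comparison $\kappa_n R_-(t)^{n+1}\leq|\Omega_t|\leq\kappa_n R^+(t)^{n+1}$ with the two-sided bound on $|\Omega_t|$ from Corollary~\ref{al}. The paper condenses this into a single chain of inequalities, but the content is identical.
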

\begin{proof}
  By virtue of the boundedness of the isoperimetric ratio, we can use Proposition \ref{p1} to say that $\frac{R^+(t)}{R_-(t)}$ is uniformly bounded by a constant $c_2$ depending only on $n$, $|\mo|$ and $|\Omega_0|$. Then, comparing $|\Omega_t|$ with the volume of a ball and using Corollary \ref{al}, we find
  $$V_1\leq|\Omega_t|\leq\omega_n\frac{(R^+(t))^{n+1}}{n+1}\leq\omega_n\frac{(c_2R_-(t))^{n+1}}{n+1}
  \leq c_2^{n+1}|\Omega_t|\leq c_2^{n+1}V_2.$$
  Then we obtain bounds from both sides on $R_-(t)$ and $R^+(t)$.
\end{proof}

\subsection*{Upper bound on the velocity}
Thanks to Corollary \ref{c1} and Proposition \ref{conv}, we are now able to control uniformly the velocity of the flow, and obtain curvature bounds which imply the long time existence for the solution. To do this, we follow a method first introduced by Tso \cite{T} and adapted by Andrews and by McCoy \cite{An1b,Mc1} to the volume preserving setting.
\begin{lem}\label{lpi}Given $\bar{t}\in[0,T)$, let $\bar{q}\in\Omega_{\bar{t}}$ be such that $B(\bar{q}, R_-)\subset\Omega_{\bar{t}}$, where $R_-$ is taken as in Corollary \ref{c1}. Then
$$B(\bar{q},R_-/2)\subset\Omega_t\hspace{2cm}\forall t\in[\bar{t},\min\{\tb+\tau,T\})$$
for some constant $\tau>0$  that only depends on $n,|\mo|$ and $|\Omega_0|$.
\end{lem}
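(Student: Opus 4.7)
The plan is to analyze the support function of $\mt$ relative to $\qb$, namely $u(x,t):=(F(x,t)-\qb,\nu(x,t))$. Since $\qb\in\Omega_{\tb}$ and $B(\qb,R_-)\subset\Omega_{\tb}$, the convexity of $\mt$ gives $u(\cdot,\tb)\geq R_-$: the value of the support function at a boundary point is precisely the distance from $\qb$ to the corresponding supporting hyperplane, and all of these must lie at distance at least $R_-$ from $\qb$. Conversely, once I show $u(\cdot,t)\geq R_-/2$ on $[\tb,\tb+\tau)$, writing $\Omega_t$ as the intersection of its supporting half-spaces immediately yields $B(\qb,R_-/2)\subset\Omega_t$. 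So the lemma reduces to a quantitative lower bound on $u_{\min}(t):=\min_{x\in\m}u(x,t)$.

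To obtain it I apply Hamilton's trick to $u_{\min}$, combined with the evolution equation
$$\partial_t u=\phi'\,\Delta u+\phi'|A|^2 u-\phi-\phi'H+h.$$
At a spatial minimizer $\Delta u\geq 0$, and in both \eqref{vpr} and \eqref{apr} the function $h$ is a (weighted) integral average of $\phi(H)\geq0$ over the convex $\mt$, hence $h\geq 0$. Using also the pointwise inequality $|A|^2\geq H^2/n$, this yields the differential inequality
$$\frac{d}{dt}u_{\min}\geq\frac{\phi'(H)H^2}{n}u_{\min}-\phi(H)-\phi'(H)H,$$
where $H$ is the mean curvature at the minimizer. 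The heart of the matter is to show that, \emph{as long as $u_{\min}\geq R_-/2$}, the right hand side is bounded below by a constant $-C$ depending only on $n,|\mo|,|\Omega_0|$ and the fixed function $\phi$. This is where the structural assumptions $iii)$ and $iv)$ on $\phi$ enter. For $H$ in a compact range $[0,H_0]$, property $iv)$ ($\phi'(\alpha)\alpha\to0$ as $\alpha\to 0$) together with continuity of $\phi$ and $\phi'$ on $(0,H_0]$ controls $\phi(H)+\phi'(H)H$ by a constant. For $H\geq H_0$, with $H_0$ chosen large enough in terms of $R_-$, property $iii)$ ($\phi'(\alpha)\alpha^2/\phi(\alpha)\to\infty$) together with the elementary bound $Hu_{\min}/n\geq HR_-/(2n)$ ensures that the gain term $\phi'(H)H^2u_{\min}/n$ dominates both $\phi(H)$ and $\phi'(H)H$, so that the right hand side is actually nonnegative in this regime.

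Having established $\tfrac{d}{dt}u_{\min}\geq -C$ as long as $u_{\min}\geq R_-/2$, the proof is finished by a standard continuity argument: if $t^\ast\in(\tb,\tb+R_-/(2C))$ were the first time at which $u_{\min}(t^\ast)=R_-/2$, then integrating the differential inequality over $[\tb,t^\ast]$ would yield $u_{\min}(t^\ast)\geq R_--C(t^\ast-\tb)>R_-/2$, a contradiction. Taking $\tau:=R_-/(2C)$, which depends only on $n,|\mo|,|\Omega_0|$ via Corollary \ref{c1} and the fixed $\phi$, delivers the conclusion. I expect the main obstacle to be the case analysis establishing $\tfrac{d}{dt}u_{\min}\geq -C$: it is here that the non-homogeneous nature of $\phi$ really matters, and it crucially requires the positivity of $u_{\min}$, which is precisely why the lemma is formulated with the two different radii $R_-$ and $R_-/2$.
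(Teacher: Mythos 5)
Your argument is correct, but it follows a genuinely different route from the paper's. The paper works with the radial distance $r(x,t)=|F(x,t)-\qb|$, observes the pointwise inequality $\partial_t r>-\phi(H)$ (using only $h>0$ and $u\le r$), and compares $r$ against the radius $r_B(t)$ of a shrinking ball obeying the autonomous ODE $r_B'=-\phi(n/r_B)$, $r_B(\tb)=R_-$. The contradiction comes from the geometric fact that when the shrinking ball first touches $\mt$ from inside, $H\le n/r_B$ there, so $\phi(H)\le\phi(n/r_B)$. The time $\tau$ is then chosen so that $r_B$ stays above $R_-/2$, and it depends only on $\phi$ and $R_-$. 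Your proof instead runs the maximum principle on the support function, discards $h\ge0$, uses $|A|^2\ge H^2/n$, and then splits into small-$H$/large-$H$ regimes, invoking property $iv)$ to bound $\phi+\phi'H$ near $H=0$ and property $iii)$ to let the reaction term $\phi'H^2u_{\min}/n$ dominate for large $H$ once $u_{\min}\ge R_-/2$. Both routes are valid Tso-type arguments; the paper's is more economical in that it uses no structural hypotheses on $\phi$ beyond positivity and continuity, whereas yours is more PDE-flavored and reuses the support-function evolution equation that will reappear in Propositions \ref{p2} and \ref{p3}. One presentational remark: you should note explicitly that $\qb$ remains in the interior of $\Omega_t$ on the interval under consideration (it does, since $u_{\min}>0$ there), so that ``$u_{\min}(t)\ge R_-/2$ implies $B(\qb,R_-/2)\subset\Omega_t$'' is legitimate.
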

\begin{proof}
Define $r(x,t):=|F(x,t)-\qb|$ and set $u(x,t):=(F(x,t)-\qb,\nu(x,t))$. Then
\begin{equation}
\label{confr}\partial_t r = \frac{1}{2r} \partial_t r^2 =
 (h-\phi(H))\frac{u}{r} > -\phi(H)\frac{u}{r} > -\phi(H).
\end{equation}
Let $r_B(t)$ be the radius of the ball centered in $\qb$ and contracting by
\begin{align}\label{flr}r_B'(t)=-\phi \left(\frac{n}{r_B(t)} \right)\end{align}
 with initial datum $r_B(\tb)=R_-$.
Define $f(x,t):=r(x,t)-\rb(t)$. Using \eqref{confr}, we obtain
$$\partial_t f>-\phi(H)+\phi \left(\frac{n}{\rb} \right).$$
At time $\tb$, $f(\cdot,\tb)>0$. Suppose that there exists a first time $t^*>\bar{t}$ such that $f(x^*,t^*)=0$ at some point $x^*$. Then $\partial_t f(x^*,t^*) \leq 0$. In addition, the ball with radius $\rb(t^*)$ touches $\mt$ from the inside at the point $F(x^*,t^*)$, which implies
%Suppose that exists $(x^*,t^*)$, with $t^*>\bar{t}$ such that $f(x^*,t^*)=0$. Time $t^*$ is the first time such that the function $f$ vanishes, i.e. the ball with radius $\rb(t)$ touches $\mt$ from the interior. Then
$$H(x^*,t^*)\leq\frac{n}{\rb(t^*)} \ \Longrightarrow \ \phi(H(x^*,t^*))\leq\phi\left(\frac{n}{\rb(t^*)}\right).$$
The contradiction shows that,
%\hspace{2cm}\Delta r^2=-2H(F,v)+2n$$
%$$\Rightarrow(\partial_t-\frac{\phi}{H}\Delta)r^2=2h(F,n)-2n\frac{\phi}{H}$$
%And so
%\begin{align*}
%(\partial_t-\frac{\phi}{H}\Delta)r=&\frac{1}{2r}(\partial_t-\frac{\phi}{H}\Delta)r^2+
%\frac{\phi}{rH}|\nabla r|^2\geq\\
%\geq&\frac{1}{r}h(F,\nu)-n\frac{\phi}{rH}\geq-n\frac{\phi}{rH}
%\end{align*}
%Define $t_1$ as the first time such that $\xb$ exits from interior of the evolving hypersurfaces, i.e. $t_1:=\inf\{t>\tb | \xb\notin \Omega_t\}$. If such a time doesn't exist, Lemma is proved. Otherwise $t_1<T$,  so $\xb\in\mathcal{M}_{t_1}$.
%Define
%$$\rb(t)\min_\mt r(\cdot,t)\hspace$$
for every time $t$ where the flow \eqref{flr} is defined, we have $r(x,t)\geq\rb(t)$. It now suffices to choose $\tau>0$ such that $\rb(t)\geq\frac{R_-}{2}$ for every $t\in[\bar{t},\min\{T,\tb+\tau\})$. Notice that $\tau$ depends neither on the initial time $\tb$ nor on $\qb$.
\end{proof}

\begin{prop}\label{p2} At any time $t\in[0,T)$, we have
$$\phi(H)\leq C_1$$
where $C_1$ is a positive constant only depending on $n$ and $\mo$.
%$$\phi(H)\leq\max\{\max_{\mo}\phi(H),\frac{\phi(C)}{c}d\}$$
%where $c,C$ are positive constants  only depending on  $n$ and $\mo$, and $d:=\sup_{[0,T)}(\mathrm{diam}\mt)$
\end{prop}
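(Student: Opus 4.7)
The approach is the classical Tso-type maximum principle technique of~\cite{T}, as adapted to the constrained setting in~\cite{An1b, Mc1}, combined with the short-time inner-ball estimate of Lemma~\ref{lpi}.

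The plan is to proceed as follows. Fix $\bar t \in [0, T)$ and, by Corollary~\ref{c1}, pick $\bar q \in \Omega_{\bar t}$ with $B(\bar q, R_-) \subset \Omega_{\bar t}$. On the interval $I_{\bar t} := [\bar t, \min\{\bar t + \tau, T\})$, Lemma~\ref{lpi} gives $B(\bar q, R_-/2) \subset \Omega_t$, so convexity of $\mt$ yields $u_{\bar q}(x, t) \geq R_-/2$ throughout. I would then introduce the auxiliary function
$$W(x, t) := \frac{\phi(H(x, t))}{u_{\bar q}(x, t) - R_-/4},$$
whose denominator stays bounded below by $R_-/4 > 0$ and which therefore blows up precisely when $\phi(H)$ does. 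Using the evolution equations for $\phi$ and $u$ from Section~\ref{Preliminaries}, I would compute $\partial_t W - \phi' \Delta W$; at a spatial maximum, $\nabla W = 0$ makes the two mixed gradient terms cancel, leaving a purely algebraic expression in $\phi, \phi', H, |A|^2, u_{\bar q}$ and $h$.

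The key step is the sign analysis of this expression. Two observations are decisive: first, $h \geq 0$ in both~\eqref{vpr} and~\eqref{apr} (because $\phi \geq 0$ on a convex hypersurface), so the $h$-dependent contributions enter with a favourable non-positive sign and can be discarded in an upper estimate; second, convexity gives $|A|^2 \geq H^2 / n$, and hypothesis iii) on $\phi$, namely $\phi'(\alpha)\alpha^2 / \phi(\alpha) \to \infty$ as $\alpha \to \infty$, makes the term $-\frac{R_-}{4n}\phi' H^2 \phi$ dominate the remaining positive contributions $\phi^2 + \phi \phi' H$ once $H$ exceeds a threshold $H^*$ depending only on $n$, $\phi$ and $R_-$. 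The parabolic maximum principle then forces any interior space-time maximum of $W$ on $I_{\bar t}$ to occur at a point where $H \leq H^*$, yielding there the absolute bound $W \leq 4\phi(H^*)/R_-$. Combining this alternative with the value at time $\bar t$ gives
$$\sup_{I_{\bar t}} W \leq \max \left\{ \sup_\m W(\cdot, \bar t),\ \frac{4\phi(H^*)}{R_-} \right\},$$
and multiplying by $u_{\bar q} - R_-/4 \leq 2R^+$ from Corollary~\ref{c1} recovers a bound on $\phi(H)$ throughout $I_{\bar t}$; iterating over consecutive intervals of length $\tau$, starting from $t = 0$ where $W$ is controlled by the smoothness of $\mo$, gives the uniform estimate on $[0, T)$.

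The step I expect to be the main obstacle is correctly handling the extra terms that the constraint function $h(t)$ introduces into $\partial_t W - \phi' \Delta W$ compared with the standard unconstrained flow. The sign $h \geq 0$ is a genuine feature of the constrained case, and it is precisely what allows these terms to be discarded rather than having to be absorbed by the leading negative term; without it, one would need independent estimates on $h$, which would weaken the scheme. A secondary bookkeeping point is that the constant in the interior-maximum alternative must be absolute so that the iteration over consecutive $\tau$-intervals produces a bound depending only on $n$ and $\mo$; this is ensured by the fact that $H^*$ depends only on $n$, $\phi$ and the uniform lower bound $R_-$ on the inner radius from Corollary~\ref{c1}.
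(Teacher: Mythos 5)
Your strategy follows the same Tso-type maximum-principle route as the paper: same test function $W=\phi(H)/(u_{\bar q}-R_-/4)$, same use of Lemma~\ref{lpi} to keep the denominator away from zero on a time interval of fixed length $\tau$, correct observation that the $h$-terms are nonpositive (since $h\geq 0$) and can be discarded, and correct use of $|A|^2\geq H^2/n$ together with hypothesis~iii) to make the highest-order term dominate when $H$ is large. Up to that point the computation is sound.

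The gap is in the final iteration. You extract from the maximum principle only the qualitative alternative ``on $I_{\bar t}$, either $\sup W\leq 4\phi(H^*)/R_-$ or $\sup W$ is attained at time $\bar t$'', and then propose to iterate this over consecutive intervals $[\,k\tau,(k+1)\tau)$. But $W$ depends on $\bar q$, and $\bar q$ must be rechosen at the start of each interval; passing the bound on $\phi(H)$ at the end of one interval into a bound on the new $W$ at the start of the next multiplies it by roughly $(2R^+)/(R_-/4)>1$. After $N$ intervals this compounds to $\bigl(8R^+/R_-\bigr)^N$, which depends on $T$ and is useless for the intended application (Theorem~\ref{ge} needs a bound independent of $T$ to conclude $T=\infty$). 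What is actually needed, and what your sign analysis already contains once you keep track of the $W^2$ factor, is the quantitative ODE inequality $D_+\overline{W}\leq-\overline{W}^2$ whenever $\overline{W}\geq\phi(C)/c$. Integrating this gives $\overline{W}(t)\leq\max\{1/(t-\bar t),\,\phi(C)/c\}$, a bound that is independent of the data at time $\bar t$; after the fixed waiting time $c/\phi(C)<\tau$ it falls below the absolute constant. One then chooses, for each $t$, an \emph{overlapping} interval with $\bar t=t-c/\phi(C)$ (or $\bar t=0$ for small $t$), and no compounding occurs. This is the step your plan is missing.
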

\begin{proof}
By Lemma \ref{lpi}, for every $\tb\in[0,T)$, exists $\qb$ such that $$B(\bar{q},R_-/2)\subset\Omega_t\hspace{1cm}\forall t\in[\bar{t},\min\{T,\tb+\tau\}).$$
Let us set
$$u(x,t):=(F(x,t)-\qb,\nu(x,t)).$$
Choosing $c:=\frac{R_-}{4}$ we obtain, by the convexity of $\mt$,
\begin{align}
\label{du}c\leq u-c\leq d, \qquad \forall \, t \in [\bar{t},\min\{T,\tb+\tau\}),
\end{align}
where  $d=\sup_{[0,T)}(\mathrm{diam}\mt)$ is finite by Corollary \ref{c1}. Then, the function
$$W(x,t):=\frac{\phi(H(x,t))}{u(x,t)-c}$$
is well defined on $[\bar{t},\min\{T,\tb+\tau\})$. Standard computations show that
%\begin{eqnarray*}
%\partial_t W&=&\frac{(u-c)\partial_t\phi-\phi\partial_t u}{(u-c)^2}\\
%&=&\frac{(u-c)\phi'\Delta\phi-\phi\phi'\Delta(u-c)}{(u-c)^2}\hspace{2mm}+\\
%&&-\frac{\phi'}{u-c}h|A|^2-\frac{\phi}{(u-c)^2}\{h-(\phi'H+\phi)+c|A|^2\phi'\}
%\end{eqnarray*}

\begin{align*}
\partial_t W &=\frac{(u-c)\partial_t\phi-\phi\partial_t u}{(u-c)^2}\\
&= \frac{(u-c)\phi'\Delta\phi-\phi\phi'\Delta(u-c)}{(u-c)^2}\hspace{2mm}\\
&\hspace{0.5cm}-\frac{\phi'}{u-c}h|A|^2-\frac{\phi}{(u-c)^2}\{h-(\phi'H+\phi)+c|A|^2\phi'\}
\end{align*}
and
$$\phi'\Delta W=\frac{(u-c)\phi'\Delta\phi-\phi\phi'\Delta(u-c)}{(u-c)^2}
-\frac{2\phi'}{u-c}\langle\nabla W,\nabla u\rangle.$$
Now, define
$$\wb(t):=\sup_{\mt}W(x,t)\hspace{1.5cm}X(t):=\{x\in\m | W(x,t)=\wb(t)\}$$
where by ``$\sup_{\mt}$'' we mean the supremum taken on $\m\times\{t\}$.
Then, discarding the negative $h$ terms, we find that the upper Dini derivative $D_+\wb$ satisfies
\begin{align*}
D_+\wb&\leq\sup_{X(t)}\partial_t W \leq \sup_{X(t)}(\partial_t-\phi'\Delta)W\\
&\leq\wb^2+\wb\sup_{X(t)}\phi'\Bigl(\frac{H}{u-c}-\frac{c|A|^2}{u-c}\Bigl)\\
&\leq\wb^2+\wb\sup_{X(t)}\frac{\phi'H}{u-c}\Bigl(1-\frac{c^2 H}{nd}\Bigl)
\end{align*}
where for the last inequality we used convexity of $\mt$ and \eqref{du}.

%Suppose $H\geq C$ at some $(x^*,t^*)$, with
Let us choose $C$ large enough to satisfy
\begin{equation}\label{condh}
   \begin{cases}
   C\geq\frac{3nd}{c^2}\\
   \frac{c}{\phi(C)}<\tau\\
   \end{cases}
\end{equation}
so that $H \geq C$ implies that $1-\frac{c^2H}{nd}\leq-\frac{2c^2}{3nd}H$. Now, suppose that $\wb(t^*) \geq \phi(C)/c$ for some time $t^*$. Then, using the bound $u-c \geq c$ and the monotonicity of $\phi$ we have that $H(x^*,t^*) \geq C$ for any $x^* \in X(t^*)$.
Then, we get at time $t=t^*$
\begin{align*}
D_+\wb \leq\wb^2-\frac{2c^2}{3nd}\wb\sup_{X(t^*)}\frac{\phi' H^2}{u-c} =\wb^2\sup_{X(t^*)}\left\{1-\frac{2c^2\phi'H^2}{3nd\phi}\right\}.
\end{align*}
Also, by property $iii)$ of $\phi$, we can choose $C$ sufficiently big such that $H \geq C$ implies $$1-\frac{2c^2\phi'H^2}{3nd\phi}<-1.$$
Then 
$$D_+\wb\leq-\wb^2,$$
and so a standard comparison argument implies
\begin{equation}\label{w1}
W\leq\max\left\{\max_{\mo}W,\frac{\phi(C)}{c}\right\}\hspace{1cm}\text{on  }[0,\min\{\tau,T\})
\end{equation}
in the case $\tb=0$, and
%$$\wb(t)\leq\frac{1}{t-\tb}\hspace{0.5cm}\Rightarrow$$
\begin{equation*}
W\leq\max\left\{\frac{1}{t-\tb},\frac{\phi(C)}{c}\right\}\hspace{1cm}\text{on  }[\tb,\min\{\tb+\tau,T\})
\end{equation*}
for a general $\tb$. Then we also have
\begin{equation}\label{w2}
W\leq\frac{\phi(C)}{c}\hspace{0.5cm}\text{on  }\left[\tb+\frac{c}{\phi(C)},\min\{\tb+\tau,T\} \right).
\end{equation}
Since $\tb$ is arbitrary, combining \eqref{w1} and \eqref{w2} and
using the second condition of \eqref{condh}, we obtain
$$W\leq\max\left\{\max_{\mo}W,\frac{\phi(C)}{c} \right\}\hspace{1cm}\text{on  }t \in[0,T),$$
which implies the assertion, since $\phi \leq d W$ by \eqref{du}. 
\end{proof}
\begin{cor}\label{Hub} $H$ and h are uniformly bounded on $[0,T)$.
\end{cor}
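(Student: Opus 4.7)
The plan is to extract the uniform bound on $H$ directly from Proposition \ref{p2} via invertibility of $\phi$, and then to read off the bound on $h$ from its defining formula.

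First I would argue that $H$ is bounded. By properties $i)$ and $ii)$, the function $\phi:[0,\infty)\to[0,\infty)$ is continuous, strictly increasing on $(0,\infty)$, vanishes at $0$, and tends to $+\infty$. Hence $\phi$ admits a continuous inverse $\phi^{-1}:[0,\infty)\to[0,\infty)$, and $\phi(H)\leq C_1$ together with $H\geq 0$ (which holds by strict convexity of $\mt$, Proposition \ref{conv}) immediately yields $H\leq \phi^{-1}(C_1)$. This bound is uniform in $t\in[0,T)$ since $C_1$ is, depending only on $n$ and $\mo$.

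Next I would bound $h(t)$. In the volume-preserving case \eqref{vpr},
\[
0\leq h(t)=\frac{1}{|\mt|}\int_{\mt}\phi(H)\,d\mu\leq C_1,
\]
using $\phi(H)\geq 0$ (since $\phi$ is increasing with $\phi(0)=0$) and $\phi(H)\leq C_1$. In the area-preserving case \eqref{apr}, $h(t)$ is the weighted average of $\phi(H)$ with positive weights $H$, so again
\[
0\leq h(t)=\frac{\int_{\mt}H\phi(H)\,d\mu}{\int_{\mt}H\,d\mu}\leq C_1.
\]
In both cases the bound depends only on $n$ and $\mo$.

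There is no real obstacle here; the statement is a bookkeeping corollary of Proposition \ref{p2} combined with the invertibility of $\phi$ and the averaging form of \eqref{vpr} and \eqref{apr}. The only subtle point is making sure that $H$ is nonnegative so that $\phi^{-1}$ can be applied, which is immediate from the preserved strict convexity given by Proposition \ref{conv}.
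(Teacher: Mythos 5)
Your proof is correct and follows essentially the same route as the paper: use properties $i)$ (and implicitly $ii)$) of $\phi$ together with Proposition \ref{p2} to invert $\phi$ and bound $H$, then bound $h$ as a (weighted) average of the now-bounded quantity $\phi(H)$. You simply spell out the averaging step in both cases \eqref{vpr} and \eqref{apr}, which the paper leaves implicit.
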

\begin{proof}
The boundedness of $H$ follows from Proposition \ref{p2} and property $i)$ of $\phi$, while the boundedness of $h$ follows from the boundedness of $\phi$.
\end{proof}
%By Corollary \ref{Hub} and Proposition \ref{conv} 
Since all principal curvatures are uniformly bounded on $[0,T)$,  $\mt$ can be written locally as a graph with uniformly bounded $C^{2,1}$ norm on balls of fixed radius. More precisely, the following result holds, see e.g. Lemma 3.4 in \cite{Sch2} and Section 8 in \cite{AM}.

\begin{prop}\label{gr}There exist $r,\eta>0$ depending only on $\sup H$ with the following property. Given any $(\bar{x},\tb)\in\m\times[0,T)$, there is a neighbourhood $\mathcal{U}$ of the point $\pb:=F(\bar{x},\tb)$ such that $\mt\cap\mathcal{U}$ coincides with the graph of a smooth function
$$u:B_r\times J\longrightarrow\mathbb{R}\hspace{1.5cm}\forall t\in J$$
where $B_r=B(\bar{p},r)\cap T_{\bar{p}}\m_{\tb}$ and $J:=(\max\{\tb-\eta,0\},\min\{\tb+\eta),T\}$. In addition, the $C^{2,1}$ norm of $u$ is uniformly bounded by a constant depending only on $\sup H$.
\end{prop}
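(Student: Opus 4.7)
The plan is to combine the uniform curvature bound from Corollary \ref{Hub} with standard facts on local graph representations of hypersurfaces with bounded second fundamental form, together with the uniform bound on the normal velocity.

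First I would observe that, by convexity (Proposition \ref{conv}) and the bound $H\le \sup H$ from Corollary \ref{Hub}, each principal curvature satisfies $0\le \lambda_i\le H$, so $|A|^2\le H^2$ is uniformly bounded on $\m\times[0,T)$ by a constant depending only on $\sup H$. This is the essential input. Fixing $(\bar x,\tb)$ and writing $\pb=F(\bar x,\tb)$, the bound on $|A|$ gives a radius $r_0>0$, depending only on $\sup H$, such that $\m_{\tb}$ can be written as the graph of a smooth function $u(\cdot,\tb)\colon B_{r_0}\to\mathbb{R}$ over $B_{r_0}=B(\pb,r_0)\cap T_{\pb}\m_{\tb}$. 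This is standard: convexity places the hypersurface entirely on one side of any tangent hyperplane, and the bound on $|A|$ gives quadratic control of the deviation, yielding $|u|\le Cr^2$, $|\nabla u|\le Cr$ and $|\nabla^2 u|\le C$ on $B_r$ for every $r\le r_0$.

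Second, to propagate this in time, I would use the fact that by Corollary \ref{Hub} the normal speed satisfies $|{-\phi(H)+h}|\le V$ for a constant $V$ depending only on $\sup H$. Thus points of $\mt$ move by at most $V|t-\tb|$, so by choosing $\eta>0$ small enough and taking a slightly smaller radius $r\in(0,r_0)$, both depending only on $r_0$ and $V$ (hence only on $\sup H$), one ensures that for every $t\in J=(\max\{\tb-\eta,0\},\min\{\tb+\eta,T\})$ the surface $\mt$ intersects a suitable neighbourhood $\mathcal{U}$ of $\pb$ in a single connected graph over $B_r\subset T_{\pb}\m_{\tb}$. Since the curvature bound is global in time, the spatial bounds $|u|+|\nabla u|+|\nabla^2 u|\le C$ persist on the whole interval $J$. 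The time derivative of the graph function is controlled by the normal speed via $|\partial_t u|\le C(1+|\nabla u|^2)^{1/2}\,|{-\phi(H)+h}|\le C'$, producing the required $C^{2,1}$ bound. Because $\sup H$ and $V$ are universal constants, the same $r$, $\eta$ and $C^{2,1}$ bound work for every base point.

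The main obstacle, such as it is, lies not in any delicate estimate but in verifying that a single radius in the fixed tangent hyperplane $T_{\pb}\m_{\tb}$ can serve all times $t\in J$; this is precisely what the trade-off between $r_0$ and $V\eta$ accomplishes. Everything else reduces to the classical observation that convex hypersurfaces with uniform $|A|$ bound admit uniform local graph representations, combined with the $L^\infty$ speed bound inherited from Corollary \ref{Hub}.
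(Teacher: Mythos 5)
Your proposal is correct and follows the same strategy as the references the paper cites for this proposition (Schulze's Lemma 3.4 in \cite{Sch2} and Section 8 of \cite{AM}): convexity converts the uniform bound on $H$ into a uniform bound on $|A|$, giving a local graph radius depending only on $\sup H$; the uniform bound on the normal speed $|-\phi(H)+h|$ (controlled by $\phi(\sup H)$ via monotonicity of $\phi$, for both choices of $h$) then lets one propagate the graph representation over the fixed plane $T_{\pb}\m_{\tb}$ for a time interval of definite length, and produces the $C^{2,1}$ bound. The paper itself states the result with a citation rather than a proof, so your sketch is essentially a self-contained reconstruction of the cited argument.
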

We can use this representation to deduce the global existence of the solution. We recall some formulas valid for a hypersurface locally parametrized as the graph of a function $u$. Here $D_i$ and $D^i$ denote the derivatives with respect to local Euclidean coordinates.
\begin{align*}
g_{ij}&=\delta_{ij}+D_{i}uD_{j}u & g^{ij}&=\delta^{ij}-\frac{D^{i}uD^{j}u}{1+|Du|^2}\\
h_{ij}&=\frac{D^2_{ij}u}{(1+|Du|^2)^{1/2}} & H&=\frac{1}{(1+|Du|^2)^{1/2}}\left(\delta^{ij}-\frac{D^{i}uD^{j}u}{1+|Du|^2}\right)D^2_{ij}u
\end{align*}
Then, the flow \eqref{fl} is equivalent to
\begin{equation}\label{gfl}\partial_t u=(1+|Du|^2)^{1/2}\left\{\phi\left(\frac{1}{(1+|Du|^2)^{1/2}}\left(\delta^{ij}-\frac{D^{i}uD^{j}u}{1+|Du|^2}\right)D^2_{ij}u\right)-h\right\}.
\end{equation}
%Now we can deduce the global existence of the solution.

\begin{theorem}\label{ge} The solution $\mt$ of the flow \eqref{fl} exists for any time.
\end{theorem}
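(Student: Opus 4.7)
The plan is to argue by contradiction: assume the maximal existence time $T$ is finite and then extend the solution smoothly past $T$, contradicting maximality. What is missing for this is not the velocity bound, which Corollary \ref{Hub} already provides, but uniform higher regularity of $F$ up to $t=T$. The strategy is to extract a uniform lower bound on curvature, recast the flow locally as a uniformly parabolic PDE, and then invoke standard regularity theory.

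First, I would revisit the proof of Proposition \ref{conv} to obtain a \emph{uniform} positive lower bound for the principal curvatures on the whole interval $[0,T)$. In that proof, the constant $c_0$ in the inequality $\partial_t b^i_j \leq \phi'\Delta b^i_j - \phi'|A|^2 b^i_j + c_0$ depends only on $n$, $\mo$, and an upper bound for $H$: there $H^*$ was chosen as the sup on a subinterval $[0,T^*)$ with $T^*<T$ to ensure finiteness, but Corollary \ref{Hub} now provides an upper bound for $H$ valid on all of $[0,T)$. Running the maximum principle argument with this global $H^*$ yields a uniform bound on $b^i_j$, and hence a uniform positive lower bound $\lambda_*>0$ for every principal curvature on $[0,T)$. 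Together with the upper bound $H\leq H^*$ and the fact that $\phi\in C^2(0,\infty)$ with $\phi'>0$, this pins $H$ in a compact subinterval of $(0,\infty)$, so $\phi'(H)$ and $\phi''(H)$ are bounded above and below by positive constants uniformly in $t$.

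Next, I would use the local graph representation of Proposition \ref{gr} and the graph form \eqref{gfl} of the flow. The $C^{2,1}$ control of the graph function $u$ bounds $|Du|$, and combined with the two-sided bound on $\phi'(H)$ this makes \eqref{gfl} uniformly parabolic in $D^2 u$ with coefficients of bounded $C^{2,1}$ norm. Standard interior parabolic regularity (Krylov--Safonov followed by Schauder) then upgrades the $C^{2,1}$ control to $C^{2+\alpha,1+\alpha/2}$ estimates on $u$; a bootstrap, obtained by differentiating \eqref{gfl} and iterating, yields uniform $C^{k}$ bounds on $u$, and hence on $F$, for every $k$, with constants independent of $t\in[0,T)$.

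Finally, these uniform smooth bounds imply that $F(\cdot, t)$ converges in $C^\infty$ as $t\to T^-$ to a smooth strictly convex embedding $F(\cdot, T)$. Short-time existence applied with $F(\cdot, T)$ as initial datum then extends the flow to an interval $[0, T+\varepsilon)$ for some $\varepsilon>0$, contradicting the maximality of $T$; therefore $T=\infty$. The decisive step, and the main obstacle, is the first one: the equation degenerates as $H\to 0$, so uniform parabolicity requires a \emph{uniform} lower curvature bound. What makes this work is feeding the global velocity bound from Corollary \ref{Hub} back into the evolution of $b^i_j$ to convert the qualitative strict convexity of Proposition \ref{conv} into a quantitative, $T$-independent estimate.
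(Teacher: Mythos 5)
Your overall strategy matches the paper's: obtain a uniform lower bound on the principal curvatures from the upper bound of Corollary \ref{Hub}, deduce uniform parabolicity of the flow on $[0,T)$ when $T<\infty$, and then use regularity theory to extend the solution past $T$. The first step in fact clarifies something the paper glosses over: feeding the global $H^*$ from Corollary \ref{Hub} into the evolution inequality $\partial_t b^i_j \le \phi'\Delta b^i_j - \phi'|A|^2b^i_j+c_0$ from Proposition \ref{conv} gives, via the maximum principle, $\max b(t)\le \max b(0)+c_0 t$, which is finite on $[0,T)$ precisely because $T$ is assumed finite. That is sound and is the right way to make Proposition \ref{conv} quantitative.

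The gap is in the regularity step. You assert that ``Krylov--Safonov followed by Schauder'' upgrades the $C^{2,1}$ control from Proposition \ref{gr} to $C^{2+\alpha,1+\alpha/2}$ estimates on $u$. For a \emph{fully nonlinear} uniformly parabolic equation, passing from bounded second derivatives to H\"older-continuous second derivatives is not a consequence of Krylov--Safonov (which gives a H\"older estimate one derivative short of where you need it): the relevant result is the Evans--Krylov theorem, and that theorem requires concavity (or convexity) of the operator in $D^2u$. Here $\phi$ is assumed neither concave nor convex, so the standard toolkit you invoke does not apply directly, and without a $C^{2,\alpha}$ estimate the Schauder bootstrap never starts. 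The paper resolves exactly this difficulty by invoking Theorem 6 of Andrews \cite{An2}, whose hypothesis is not concavity but the structural condition that $\ddot F^{ij,kl}M_{ij}M_{kl}=0$ whenever $\dot F^{ij}M_{ij}=0$. This holds here because the speed depends on $D^2u$ only through the scalar $H$: one computes $\dot F^{ij}=\phi'\bigl(\delta^{ij}-\tfrac{D^iuD^ju}{1+|Du|^2}\bigr)$ and $\ddot F^{ij,kl}$ is then a rank-one quadratic form proportional to $\dot F^{ij}\dot F^{kl}$, so it vanishes on the kernel of $\dot F$. To make your argument complete you need to replace the appeal to Krylov--Safonov/Schauder by this theorem of Andrews (or an argument that equally exploits the rank-one structure of $\ddot F$); as written, the $C^{2,\alpha}$ claim is unjustified.
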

\begin{proof}
The preservation of the strict convexity implies that the flow is parabolic for all times. We also know that all curvatures stay bounded uniformly on $[0,T)$. To get  bounds on all derivatives of curvatures on any finite time interval, consider the equation \eqref{gfl}.
Denote
\begin{align*}
&F(D^2 u, Du, u, x, t):=
\\&\hspace{2cm}(1+|Du|^2)^{1/2}\left\{\phi\left(\frac{1}{(1+|Du|^2)^{1/2}}\left(\delta^{ij}-\frac{D^{i}uD^{j}u}{1+|Du|^2}\right)D^2_{ij}u\right)-h\right\}\\
%\end{align*}
%\begin{align*}
&\dot{F}^{ij}:=\frac{\partial F}{\partial D^2_{ij}u} \hspace{2cm} \ddot{F}^{ij,kl}:=\frac{\partial^2 F}{\partial D^2_{ij}u \, \partial D^2_{kl}u}.
\end{align*}
Notice that, on $B_r\times J$:
\begin{enumerate}
\item there exist constants $\lambda, \Lambda>0$ such that
$$\lambda \, {\rm Id}\leq\dot{F}^{ij}\leq\Lambda \, {\rm Id}.$$
In fact, % for any $k=1,\dots n$
$$\dot{F}^{ij}=\phi'\cdot\left(\delta^{ij}-\frac{D^iuD^ju}{1+|Du|^2}\right).$$
Given $w=\{w^i\}\in\mathbb{R}^n$,
$$\dot{F}^{ij}w^iw^j=\phi'\cdot\left(|w|^2-\frac{\langle Du,w\rangle^2}{1+|Du|^2}\right).$$
Then
%using Cauchy-Schwarz inequality we get
$$0<c\inf_{B_r\times J}\phi'|w|^2
\leq \dot{F}^{ij}w^iw^j\leq\sup_{B_r\times J}\phi'|w|^2<\infty$$
with $c>0$ a constant depending on $\sup_{B_r\times J}|Du|$.
%\vspace{1cm}
\item  Given any matrix $M_{ij}$ for which $\dot{F}^{ij}M_{ij}=0$, holds
$$\ddot{F}^{ij,kl}M_{ij}M_{kl}=0.$$
This trivially follows computing $\ddot{F}^{ij,kl}$.
\end{enumerate}
So we can apply Theorem $6$ in \cite{An2} to obtain a $C^{2,\alpha}$ estimate on $u$, for a suitable $\alpha \in(0,1)$. By standard parabolic theory, we can deduce uniform bounds on all higher derivatives of $u$. Covering $\mt$ with graphs over balls of radius $r$ we obtain  H\"older estimates for the curvature and its derivatives on any finite time interval, which imply that the maximal time of existence of the solution is infinite.
\end{proof}

\section{Convergence to a sphere}\label{Convergence}
\subsection*{Lower bound for the mean curvature}
In order to prove the convergence of the solution to a sphere, we need to use an argument similar to the one of Theorem \ref{ge} to obtain derivative bounds in the whole interval $t \in [0,+\infty)$. To do this, it is essential to have a positive lower bound on $H$, since Proposition \ref{conv} implies uniform convexity only on finite time intervals. Let us first give a preliminary result.
\begin{lem}\label{lpe}Given $\bar{t}\in[0,\infty)$, let $\bar{q}\in\Omega_{\bar{t}}$ be such that $\Omega_{\bar{t}}\subset B(\bar{q}, R^+)$, where $R^+$ is taken as in Corollary \ref{c1}. Then
$$\Omega_t\subset B(\bar{q},2R^+)\hspace{2cm}\forall t\in[\bar{t},\tb+\sigma]$$
where $\sigma>0$ is a constant that only depends on $n, |\mo|, |\Omega_0|$ and $\sup_{t} h(t)$.
\end{lem}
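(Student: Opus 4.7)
I would mirror the proof of Lemma \ref{lpi}, but with the inner contracting barrier replaced by an outer expanding one. Define $r(x,t) := |F(x,t) - \qb|$ and $u(x,t) := (F(x,t) - \qb, \nu(x,t))$; the same computation as in \eqref{confr} yields
$$\partial_t r = (h(t) - \phi(H))\,\frac{u}{r}.$$

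Let $\bar h := \sup_{s \geq 0} h(s)$, an admissible parameter of $\sigma$ by Corollary \ref{Hub} together with Theorem \ref{ge}. For an arbitrary small $\epsilon > 0$, I would introduce the expanding ball of radius $R_B(t) := R^+ + (\bar h + \epsilon)(t - \tb)$ and argue, by a first-contact comparison, that $r(x,t) \leq R_B(t)$ on $\m \times [\tb, \tb + \sigma]$. Indeed, suppose $t^* > \tb$ is the first time at which $r(x^*, t^*) = R_B(t^*)$ for some $x^* \in \m$. Then $\mathcal{M}_{t^*}$ is contained in $\overline{B(\qb, R_B(t^*))}$ and tangent to $\partial B(\qb, R_B(t^*))$ at $F(x^*, t^*)$; convexity of $\mt$ (Proposition \ref{conv}) forces the outward unit normals to agree there, hence $u(x^*, t^*) = r(x^*, t^*) = R_B(t^*)$, and comparison of second fundamental forms gives $H(x^*, t^*) \geq n/R_B(t^*) > 0$.

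Consequently $\phi(H(x^*, t^*)) \geq 0$, and
$$\partial_t r(x^*, t^*) = h(t^*) - \phi(H(x^*, t^*)) \leq \bar h < \bar h + \epsilon = R_B'(t^*),$$
contradicting the first-contact condition $\partial_t (r - R_B)(x^*, t^*) \geq 0$. Hence $r \leq R_B$ throughout $[\tb, \tb + \sigma]$. Requiring $R_B(t) \leq 2R^+$ forces $t - \tb \leq R^+/(\bar h + \epsilon)$; sending $\epsilon \to 0$ yields the admissible value $\sigma = R^+/\bar h$, which depends only on $n$, $|\mo|$, $|\Omega_0|$ (through $R^+$, via Corollary \ref{c1}) and on $\sup_t h(t)$, as required.

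I do not foresee any substantial obstacle; the whole argument is a direct outer-ball dual of the inner-ball comparison of Lemma \ref{lpi}, using the positivity of $\phi$ in place of the bound $h > -\phi(H)$ used there. The only mild technical point is the boundary case $t^* = \tb$: should the initial datum already touch $\partial B(\qb, R^+)$, the strict inequality $\partial_t r < R_B'$ at any such contact point immediately pushes $r$ below $R_B$, so by continuity the first contact, if any, can only occur at some $t^* > \tb$ and the argument applies unchanged.
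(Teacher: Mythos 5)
Your proposal is correct and takes essentially the same approach as the paper: comparison with a ball of radius $R(t)=R^+ + \tilde h\,(t-\tb)$, $\tilde h=\sup_t h$, centered at $\qb$, expanding at the rate that dominates $\partial_t r$. The paper proceeds slightly more directly, showing $\partial_t\bigl(R(t)-r\bigr)\geq 0$ pointwise (using $0\le u\le r$ so $hu/r\le\tilde h$ and $\phi u/r\ge 0$) rather than via an $\epsilon$-perturbed first-contact argument, but the underlying comparison and the resulting value $\sigma=R^+/\tilde h$ are identical.
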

\begin{proof}
Let us compare $\mt$ with the sphere centered in $\qb$ whose radius $R(t)$ increases linearly according to
$$R(t)=\tilde{h}(t-\tb)+R^+$$
where $\tilde{h}=\sup_{t} h(t)$. $R(t)$ grows to $2R^+$ at time $t=\tb+\frac{R^+}{\tilde{h}}$. Denote $\sigma:=\frac{R^+}{\tilde{h}}$.

Similarly as in Lemma \ref{lpi}, set
$$r(x,t):=|F(x,t)-\qb|,  \hspace{1.5cm} u(x,t):=(F(x,t)-\qb,\nu(x,t)).$$
Then, the function $f(x,t):=R(t)-r(x,t)$ satisfies
$$\partial_t f=\tilde{h}-\frac{hu}{r}+\frac{\phi u}{r}\geq\frac{\phi u}{r}\geq0.$$
So $f(x,t)\geq0$ for every time, and $r(x,t) \leq R(\tb+\sigma)=2R^+$ for $t \in[\bar{t},\tb+\sigma]$.
\end{proof}

\begin{lem}\label{h} There exists $b>0$ such that 
$$h(t)\geq b\hspace{5mm}\forall t\in[0,\infty).$$  
\end{lem}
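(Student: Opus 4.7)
The plan is to extract a uniform lower bound on $h(t)$ from the geometric bounds already established, via a uniform positive lower bound on $\int_{\mt}\phi(H)\, d\mu$. The main obstacle is that, since $\phi$ is only assumed monotone (not convex, nor of any specific growth at zero), one cannot bound $\int \phi(H)$ below directly from $\int H$ or $|\mt|$ by a Jensen-type argument. I will work around this by showing that $H$ cannot be uniformly small: more precisely, that the super-level set $\{H\ge a\}$ has measure bounded away from zero for some uniform $a>0$.

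First, I would use the total curvature identity. Since $\mt$ is smooth and strictly convex by Proposition \ref{conv}, the Gauss map is a diffeomorphism onto $S^n$, so
$$\int_{\mt} K\, d\mu = \omega_n,$$
where $K=\lambda_1\cdots\lambda_n$. Combining this with the AM-GM inequality $K\le (H/n)^n$ yields the universal bound $\int_{\mt} H^n\, d\mu\ge n^n\omega_n$.

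Now, using the uniform upper bound $H\le H^*$ from Corollary \ref{Hub} and $|\mt|\le M_2$ from Corollary \ref{al}, for any $a>0$ I split
$$n^n\omega_n \le \int_{\mt}H^n\, d\mu = \int_{\{H\ge a\}}H^n\, d\mu + \int_{\{H<a\}}H^n\, d\mu \le (H^*)^n\,|\{H\ge a\}| + a^n M_2.$$
Choosing $a$ so that $a^n M_2 = n^n\omega_n/2$ forces $|\{H\ge a\}|\ge e_0 := n^n\omega_n/(2(H^*)^n)>0$, with $a$ and $e_0$ depending only on $n, \mo$. By monotonicity of $\phi$,
$$\int_{\mt}\phi(H)\, d\mu \ge \phi(a)\,e_0 > 0.$$

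Finally, in the volume-preserving case \eqref{vpr}, $h(t) = \int_{\mt}\phi(H)\, d\mu /|\mt| \ge \phi(a)e_0/M_2$, and we are done. In the area-preserving case \eqref{apr}, the functions $H$ and $\phi(H)$ are comonotone on $\mt$, so the Chebyshev-type inequality
$$|\mt|\int_{\mt}H\phi(H)\, d\mu \ge \left(\int_{\mt}H\, d\mu\right)\!\left(\int_{\mt}\phi(H)\, d\mu\right)$$
(which follows from integrating $(H(x)-H(y))(\phi(H(x))-\phi(H(y)))\ge 0$ against $d\mu(x)\, d\mu(y)$) gives $h(t)\ge \int_{\mt}\phi(H)\, d\mu/|\mt|$, and the same lower bound applies. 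Thus $b:=\phi(a)e_0/M_2$ works in both cases.
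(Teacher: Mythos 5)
Your proof is correct, and it follows the same overall strategy as the paper's: obtain a uniform positive lower bound on $\int_{\mt}\phi(H)\,d\mu$ by showing that the super-level set $\{H\ge a\}$ has measure bounded away from zero for a suitable uniform $a>0$, then invoke the monotonicity of $\phi$, and in the area-preserving case reduce to the volume-preserving estimate via the Chebyshev inequality (which the paper re-derives, as you note, inside the proof of Lemma~\ref{l1}). The one genuine difference is the geometric input used to start the argument: the paper uses the Alexandrov--Fenchel inequality $\int_{\mt}H\,d\mu\ge C_n|\Omega_t|^{(n-1)/(n+1)}$ together with the lower volume bound from Corollary~\ref{al}, whereas you use the degree-one property of the Gauss map, $\int_{\mt}K\,d\mu=\omega_n$, combined with AM--GM to get $\int_{\mt}H^n\,d\mu\ge n^n\omega_n$. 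Your inequality is arguably more elementary (it needs only the Gauss map being a diffeomorphism, which follows from Proposition~\ref{conv}, plus AM--GM, rather than the full Alexandrov--Fenchel machinery), and has the minor advantage of a constant independent of $\Omega_t$; the paper's version avoids taking an $n$-th power and ties more directly into the $|\Omega_t|$ bounds already in hand. The final estimate $\int_{\mt}\phi(H)\ge\phi(a)e_0$ is also slightly cleaner than the paper's detour through $m=\min_{k\le H\le H^*}\phi(H)/H$, though the two are equivalent by monotonicity of $\phi$. Both proofs are sound and yield a constant $b$ depending only on $n$, $\mo$, $\Omega_0$.
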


\begin{proof}
%We can unify together the two cases of $h(t)$ given by \eqref{apr} and \eqref{vpr}. In fact, from the proof of Lemma \ref{l1} in the area preserving case, we have
%$$\int_{\mt}\phi(H)H\geq\frac{1}{|\mt|}\int_{\mt}H\int_{\mt}\phi(H)$$
%Then
%$$h(t)=\frac{\int_{\mt} H\phi(H)}{\int_{\mt} H}\geq\frac{1}{|\mt|}\int_{\mt}\phi(H)$$
%So, even in the area preserving case, we can reduce to find a uniform lower bound on $\frac{1}{|\mt|}\int_{\mt}\phi(H)$. 
Let us first prove a bound from below on $\frac{1}{|\mt|}\int_{\mt}\phi(H) \, d\mu$. 
%From Corollary \ref{al}, we have
%\begin{equation}\label{hb}
%\frac{1}{|\mt|}\int_{\mt}\phi(H)\geq\frac{1}{M_2}\int_{\mt}\phi(H)
%\end{equation}
A well-known consequence of the Alexandrov-Fenchel inequalities is that there exists a constant $C_n$ only depending by $n$, such that 
$$\int_{\mt}H \, d\mu \geq C_n|\Omega_t|^{\frac{n-1}{n+1}}$$
so, by Corollary \ref{al} we get
$$\int_{\mt}H\, d\mu\geq C_0,$$
where $C_0>0$ is a constant depending by $n$ and the initial datum. 
By Corollary \ref{Hub}, there exists some value $H^*$ such that $H\leq H^*$ on $\mt$, for all $t$. Let $k=\frac{C_0}{2|M_0|}$ and $\tilde{\mt}=\{ x\in\m| H(x,t)\geq k\}$. Then we have
\begin{align*}
C_0&\leq\int_{\mt}H \, d\mu =\int_{\tilde{\mt}}H \, d\mu+\int_{\mt\smallsetminus\tilde{\mt}}H \, d\mu \leq H^*|\tilde{\mt}|+k|\mt|\\
&\leq H^*|\tilde{\mt}|+\frac{C_0}{2}
\end{align*}
where for the last inequality we used the fact that $|\mt|\leq|\mo|$ for all $t$. Thus
\begin{equation}
\label{mtb}
 |\tilde{\mt}|\geq\frac{C_0}{2H^*}.
 \end{equation}
Let us set  $m=\min_{k\leq H\leq H^*}\frac{\phi(H)}{H}$.
Using Corollary \ref{al}, we conclude
\begin{align*}
\frac{1}{|\mt|}\int_{\mt}\phi(H) \, d\mu &\geq\frac{1}{M_2}\int_{\mt}\phi(H) \, d\mu \geq\frac{1}{M_2}\int_{\tilde{\mt}}\phi(H) \, d\mu \\
&\geq\frac{m}{M_2}\int_{\tilde{\mt}}H \, d\mu \geq \frac{m}{M_2}|\tilde{\mt}|k\geq\frac{mC_0k}{2H^*M_2}>0,
\end{align*}
which gives a uniform bound from below on $h(t)$ in the volume-preserving case. In the area preserving case, the above computations also imply an estimate on $h(t)$ using the inequality
$$\int_{\mt}\phi(H)H \, d\mu\geq\frac{1}{|\mt|}\int_{\mt}H \, d\mu\int_{\mt}\phi(H) \, d\mu,$$
which was proved in the second part of the proof of Lemma \ref{l1}.

\end{proof}

To obtain a lower bound on $H$, we now use a technique analogous to Proposition \ref{p2}, but we reverse the sign of the test function by considering a ball which encloses $\mt$ instead of an enclosed one.  A similar argument was used in \cite{Schn} for an expanding flow. In contrast to the upper bound in Proposition \ref{p2}, the proof of the next result depends crucially on the presence of the nonlocal term $h(t)$.

\begin{prop}\label{p3}The mean curvature $H$ is uniformly bounded from below by a positive constant.
\end{prop}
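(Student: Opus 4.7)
The plan is to mirror the Tso-type argument of Proposition \ref{p2}, but applied to a test function that detects where $\phi(H)$ becomes small. By Corollary \ref{c1}, for any $\bar t \geq 0$ one can pick $\bar q \in \mathbb{R}^{n+1}$ with $\Omega_{\bar t} \subset B(\bar q, R^+)$; Lemma \ref{lpe} then gives $\Omega_t \subset B(\bar q, 2R^+)$ on $[\bar t, \bar t + \sigma]$. Setting $c := 3R^+$ and $u(x,t) := (F(x,t) - \bar q, \nu(x,t))$, one has $R^+ \leq c - u \leq 3R^+$ on this interval. I consider
\[
V(x,t) := \frac{c - u(x,t)}{\phi(H(x,t))},
\]
and the goal is to bound $V$ from above on $[\bar t, \bar t + \sigma]$; by monotonicity of $\phi$ this yields a positive lower bound for $H$.

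Using the evolution equations for $u$ and $\phi$ from Section \ref{Preliminaries}, I compute $(\partial_t - \phi'\Delta)V$. At a spatial maximum, $\nabla V = 0$ gives a relation between $\nabla u$ and $\nabla\phi$ which, together with the identity for $\Delta((c-u)/\phi)$, makes all gradient terms cancel, exactly as in Proposition \ref{p2}. Using $\Delta V \leq 0$ at the maximum and simplifying, one obtains
\[
\partial_t V \leq \frac{\phi'|A|^2(Vh - c)}{\phi} + 1 + \frac{\phi'H - h}{\phi}.
\]

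The idea is now to exploit the fact that $V$ large at the maximum forces $\phi(H)$, and hence $H$, to be small. If $H$ is small enough, property $iv)$ gives $\phi'H \leq b/2$, so $(\phi'H - h)/\phi \leq -b/(2\phi) \leq -bV/(2c)$, where I use Lemma \ref{h} and the inequality $1/\phi \geq V/c$ coming from $c - u \leq c$. Property $iii)$ combined with the convexity bound $|A|^2 \leq H^2$ makes $\phi'H^2/\phi$ arbitrarily small as $H \to 0$, so the potentially bad term $\phi'|A|^2 Vh/\phi$ can be absorbed into $bV/(4c)$. Putting these together, there exists a threshold $V_0$ depending only on $n$, $\mo$, $\Omega_0$ such that
\[
\partial_t V \leq -\frac{b}{4c}V + 1
\]
at any interior maximum where $V \geq V_0$. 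A standard ODE comparison then produces a uniform bound $V \leq \max\{V_{\max}(\bar t), V_0, 4c/b\}$ on $[\bar t, \bar t + \sigma]$.

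The main obstacle is passing from this local-in-time control to a global bound, since the reference point $\bar q$ (and hence $V$) depends on $\bar t$. The strategy is to iterate the estimate over overlapping intervals $[\bar t_i, \bar t_i + \sigma]$: the bound on $\phi$ obtained on interval $i$ translates into a bound on the new test function at the start of interval $i+1$, up to a loss factor controlled by $c/R^+$, and the ODE decay from the inequality above is used to balance this loss. The decisive ingredient throughout is the negative contribution $-h/\phi$: it is provided entirely by the nonlocal term $h(t)$ and, thanks to Lemma \ref{h}, is bounded above by $-bV/c$. As the text emphasizes, no analogous mechanism exists in the unconstrained setting, which is why this lower curvature bound is specific to the volume/area preserving flow.
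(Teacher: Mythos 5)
Your argument is essentially the same as the paper's: the paper works with $W = \phi(H)/(c-u)$ (with $c = 4R^+$) and shows that its infimum $\wbi$ satisfies $D_-\wbi \geq \tfrac{b}{4c}\wbi > 0$ whenever $\wbi$ is below a threshold $\phi(\beta)/c$, using properties $iii)$ and $iv)$ of $\phi$ and Lemma~\ref{h} in exactly the way you do; your $V = (c-u)/\phi(H)$ is just the reciprocal, your displayed evolution inequality for $V$ is correct, and the mechanism (the $-h/\phi$ term being decisive and supplied by the constraint) is identified exactly as in the paper.

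The one place where you diverge is the final paragraph, and there I think your proposed fix is not sound as stated. The paper does not iterate at all: having shown that $W$ cannot attain a new minimum below $\phi(\beta)/c$, it concludes directly that $W \geq \min\{\wbi(0),\phi(\beta)/c\}$ for all time. You instead propose to glue the intervals $[\bar t_i,\bar t_i+\sigma]$ by balancing the loss factor from switching $\bar q$ against the ODE decay from $\partial_t V \leq -\tfrac{b}{4c}V + 1$. But this balance does not obviously close: the decay over an interval of length $\sigma$ is governed by $e^{-b\sigma/(4c)}$, and since $\sigma$ from Lemma~\ref{lpe} is of order $R^+/\sup h$ while $c$ is a fixed multiple of $R^+$ and $b \leq \sup h$, the exponent $b\sigma/(4c)$ is a small number bounded away from $\log$ of the loss factor $c/R^+$. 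So the multiplicative loss at each switch is not compensated by the exponential decay, and the iterated bound degrades. This is precisely the point the paper's monotonicity conclusion is meant to circumvent: the relevant differential inequality is a one-sided statement that $\wbi$ is strictly increasing whenever it is small, which is used to conclude that the minimum can never cross a fixed threshold; it is not used as an exponential decay estimate to be traded against a per-step loss. Your writeup should follow the monotonicity/barrier logic rather than the loss-vs-decay iteration; as written, the last step does not go through.
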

\begin{proof}
Given any $\tb \geq 0$, let $\qb$ be chosen so that the conclusion of Lemma \ref{lpe} holds. We define
$$W(x,t):=\frac{\phi(H)}{c-u(x,t)}\hspace{1cm}c:=4R^+,$$
which is well defined on $[\tb,\tb+\sigma]$, because on such interval we have
$$\frac{c}{2}\leq c-u\leq c$$
where $u(x,t):=(F(x,t)-\qb,\nu(x,t))$ as usual.
Standard computations show that
\begin{align*}
\partial_t W&=
\phi'\Delta W +\frac{2\phi'}{c-u}(\nabla W,\nabla (c-u)) \\
& \hspace{0.5cm}-\frac{\phi'}{c-u}h|A|^2-\frac{\phi}{(c-u)^2}\{-h+\phi'H+\phi-c|A|^2\phi'\}.
\end{align*}
Now, define
$$\wbi(t):=\inf_{\mt}W(x,t)\hspace{1.5cm}Y(t):=\{x\in\m | W(x,t)=\wbi(t)\}.$$
Then, after disregarding the last positive term, we obtain
\begin{eqnarray}\label{ew2}
D_-\wbi & \geq & \inf_{Y(t)}\left\{-\frac{\phi'hH^2}{c-u}+\frac{h}{c-u}W-\frac{\phi'H}{c-u}W-W^2\right\} \nonumber \\
& \geq & \wbi \inf_{Y(t)}\left\{-\frac{\phi'hH^2}{\phi}+\frac{h}{c}-\frac{2\phi'H}{c}-\frac{2\phi}{c}\right\}.
\end{eqnarray}
%Using properties $iii)$ and $iv)$ of $\phi$, we can suppose that if $H$ is sufficiently small, then
%\begin{equation}\label{epw} \phi'H^2<\epsilon_1\phi\hspace{5mm}\text{and}\hspace{5mm}\phi'H<\epsilon_2
%\end{equation}
%where $\epsilon_1,\epsilon_2>0$ are approaching to zero. Because of the fact that $\phi(0)=0$, we can also assume that $W$ reaches its minimum when $H$ is very small, and so from  \eqref{ew2}, \eqref{epw} and Lemma \ref{h}, we obtain
%\begin{equation}\label{dwb}
%D_-\wbi\geq\wbi\inf_{Y(t)}\left\{-\epsilon_1\frac{h}{c-u}+\frac{m}{c-u}-\frac{\epsilon_2}{c-u}-W\right\}
%\end{equation}
%The quantity on the right side of \eqref{dwb} is positive, since we can suppose $H$ sufficiently small, say $H<\beta$, with $\beta>0$ a suitable constant. So, on any time interval $[\tb,\tb+\sigma]$, if $H<\beta$, then the minimum of $W$ is increasing in time. Then,
%$$W(x,t)\geq\min\left\{\wbi(0),\frac{\phi(\beta)}{c}\right\}\hspace{1cm}\text{on }[0,\infty)$$
%%such that $-W+\frac{h}{c-u}>\beta$, where $\beta$ is a suitable positive constant. So, by the maximum principle,
%and so $H$ is bounded from below  on the same time interval by a positive constant.

Using properties $i)$,  $iii)$ and $iv)$ of $\phi$, we can fix $\beta>0$ such that, if $H \in (0,\beta)$, we have
\begin{equation}\label{epw}
\frac{\phi'H^2}{\phi} < \frac{1}{2c}, \qquad
 \phi + \phi'H<\frac{b}{8}
\end{equation}
where $b>0$ is the lower bound on $h(t)$ given by Lemma \ref{h}. Suppose now that $\wbi(t) < \phi(\beta)/c$ at some time $t$. Then
$\phi(H) \leq \beta$ on $Y(t)$ and therefore
\begin{equation}\label{dwb}
D_-\wbi\geq\wbi \left\{-\frac{h}{2c}+\frac{h}{c}-\frac{b}{4c} \right\} \geq \frac b{4c} \wbi>0.
\end{equation}
This shows that $W$ cannot attain a new minimum smaller than $\phi(\beta)/c$, thus
$$W(x,t)\geq\min\left\{\wbi(0),\frac{\phi(\beta)}{c}\right\}\hspace{1cm}\text{on }[0,\infty).$$
%such that $-W+\frac{h}{c-u}>\beta$, where $\beta$ is a suitable positive constant. So, by the maximum principle,
From this we deduce that $\phi$, and so $H$, is bounded from below for all times by a positive constant.
\end{proof}

\subsection*{Smooth convergence to a sphere}
Proposition \ref{p3}, together with Corollary \ref{Hub}, implies that $H$ takes values in a fixed compact subset of $(0,+\infty)$ for all times. Therefore $\phi'(H)$ is bounded from above and below by positive constants for all $t \in [0,+\infty)$ and the flow is uniformly parabolic. Arguing as in the proof of Theorem \ref{ge}, we obtain that all derivatives of the curvatures are bounded for $t \in[0,\infty)$. So, by compactness, the hypersurfaces $\mt$ converge, up to time subsequences, to a smooth limit $\mathcal{M}_{\infty}$. To prove that this limit has to be a sphere, we  %Thus  the hypersurfaces converge to some smooth limit $\mathcal{M}_{\infty}$ as $t\to\infty$. We have to prove that this limit is a sphere. \\ 
% We will use this result to obtain  H\"older estimates on curvatures.
show that $\phi$ tends to its mean value.
\begin{prop}\label{hc} The velocity $\phi(H)$ tends uniformly to its mean value, i.e.
$$\lim_{t\to\infty}\max_{\mt}|\phi(H(x,t))-h(t)|=0$$
\end{prop}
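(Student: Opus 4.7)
The plan is to establish the space-time integrability
$$
\int_0^{\infty}\!\!\int_{\mt}\bigl(\phi(H)-h(t)\bigr)^{2}\,d\mu\,dt<\infty
$$
and then upgrade it to uniform decay via the space-time regularity of $\phi(H)-h$ already at hand.

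For the integrability, I would revisit the proof of Lemma \ref{l1}. In the volume-preserving case, with $\bar H(t)$ defined by $\phi(\bar H(t))=h(t)$, the identity derived there reads
$$
-\frac{d}{dt}|\mt|=\int_{\mt}(H-\bar H)\bigl(\phi(H)-h\bigr)\,d\mu.
$$
By Corollary \ref{Hub} and Proposition \ref{p3}, $H$ and hence $\bar H$ range over a fixed compact subset $K\subset(0,\infty)$ on which $\phi'$ is bounded above and below by positive constants. The mean value theorem therefore yields $(H-\bar H)(\phi(H)-h)\geq c_1(\phi(H)-h)^{2}$ for a constant $c_1>0$ depending only on $K$. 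Since $|\mt|$ is bounded and monotone nonincreasing (Lemma \ref{l1} and Corollary \ref{al}), integrating in $t$ over $[0,\infty)$ gives the claimed bound. In the area-preserving case the analogous computation gives $\int_0^{\infty}\!\int_{\mt}(\phi(H)-\phi(\bar H))^{2}\,d\mu\,dt<\infty$ with $\bar H=\frac{1}{|\mt|}\int_{\mt}H\,d\mu$, and the identity
$$
h-\phi(\bar H)=\frac{1}{\int_{\mt}H\,d\mu}\int_{\mt}H\bigl(\phi(H)-\phi(\bar H)\bigr)\,d\mu
$$
combined with Cauchy–Schwarz bounds $|h-\phi(\bar H)|^{2}$ by the same integrand, so the same conclusion follows.

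To upgrade integrability to uniform convergence, I would argue by contradiction. After Proposition \ref{p3} the flow is uniformly parabolic, and the bootstrapping procedure of Theorem \ref{ge} now produces uniform $C^{1}$ bounds on $\phi(H)$ in space and time on all of $\m\times[0,\infty)$; the nonlocal term $h(t)$ is Lipschitz in $t$ as an integral average of such quantities. If the proposition failed, there would exist $\epsilon>0$, times $t_k\to\infty$ and points $x_k\in\m$ with $|\phi(H(x_k,t_k))-h(t_k)|\geq\epsilon$. Uniform continuity then produces a radius $\rho>0$, independent of $k$, such that $|\phi(H(x,t))-h(t)|\geq\epsilon/2$ on the intrinsic ball $B_\rho(x_k)\subset\mathcal{M}_{t_k}$ for $|t-t_k|<\rho$; the $d\mu$-measure of $B_\rho(x_k)$ is uniformly bounded below thanks to the uniform geometry from Corollary \ref{c1} and the uniform graph representation of Proposition \ref{gr}. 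Passing to a subsequence with pairwise disjoint time intervals and summing these contributions contradicts the finite integral above.

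The main technical point is the area-preserving case, where $h$ is not directly $\phi(\bar H)$ but only controlled via the Cauchy–Schwarz step, and the bookkeeping in the contradiction argument to ensure intrinsic balls of uniformly positive measure; both are routine given the uniform estimates already obtained.
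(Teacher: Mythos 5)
Your proposal is correct and follows essentially the same route as the paper: the identity $-\tfrac{d}{dt}|\mt|=\int_{\mt}(H-\bar H)(\phi(H)-h)\,d\mu$, the lower bound $(H-\bar H)(\phi(H)-h)\geq(\sup\phi')^{-1}(\phi(H)-h)^2$ on the compact range of $H$, and then the upgrade from the resulting decay of $|\mt|$ to uniform convergence via the uniform Lipschitz bound on $\phi(H)-h$ (from the post-Proposition~\ref{p3} bootstrapping) and the uniformly positive measure of parabolic neighbourhoods. The paper phrases the final step as ``the area can drop by a fixed amount $\eta(a)$ only finitely many times,'' which is the same contradiction you run via disjoint time intervals and the finite space-time integral; your treatment of the area-preserving case, relating $h-\phi(\bar H)$ to $\int_{\mt}H(\phi(H)-\phi(\bar H))\,d\mu$ by Cauchy--Schwarz, is the natural way to fill in what the paper leaves to the reader with ``can be treated similarly.''
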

\begin{proof}
We consider only the volume preserving case, since the area preserving case can be treated similarly.
For any $t$, let $\bar{H}(t)$ such that $\phi(\bar{H}(t))=h(t)$. Then we compute
\begin{align*}
\frac{d}{dt}|\mt|&=\int_{\mt}Hh \, d\mu-\int_{\mt}H\phi(H) \, d\mu\\
&=\int_{\mt}(H-\bar{H})(\phi(\bar{H})-\phi(H)) \, d\mu\\
&=-\int_{\mt}|H-\bar{H}||\phi(H)-\phi(\bar{H})| \, d\mu.
\end{align*}
Now, using the bound on $\phi'$ we obtain
\begin{align*}
\frac{d}{dt}|\mt|&\leq - \frac{1}{\sup\phi'} \int_{\mt}|\phi(H)-\phi(\bar{H})|^2 \, d\mu.\\
&=- \frac{1}{\sup\phi'}\int_{\mt}|\phi(H)-h|^2 \, d\mu.
\end{align*}
Suppose that $|\phi(H)-h|=a$ for some $a>0$ at some point $(\xb,\tb)$. The derivative bounds on the curvature imply that $H$ is uniformly Lipschitz continuous, and then there exists a radius $r(a)$, not depending by $(\xb,\tb)$, such that 
$$|\phi(H)-h|>\frac{a}{2}\hspace{1cm}\text{on }B((\xb,\tb),r(a))$$
where $B((\xb,\tb),r(a))$ is the parabolic neighbourhood centered at $(\xb,\tb)$ of radius $r(a)$. 
Then
\begin{equation}\label{as}
\frac{d}{dt}|\mt|<-\eta(a)\hspace{1cm}\forall t\in[\tb-r(a),\tb+r(a)]
\end{equation}
for some $\eta>0$ only depending on $a$.

By Lemma \ref{l1}, $|\mt|$ is positive and decreasing in time, and so property \eqref{as} can occur only on a finite number of time intervals, for any given $a>0$. This shows that $|\phi(H)-h|$ tends to zero uniformly.
\end{proof}
Proposition \ref{hc} implies that any possible limit of subsequences of $\mt$ has constant mean curvature, and so is a sphere. Standard techniques, see e.g. \cite{An1}, allow now to conclude that the whole family $\mt$ converges smoothly to a sphere. Thus the proof of Theorem \ref{mt} is complete. 
\bigskip\\
{\bf Acknowledgments}
Carlo Sinestrari was partially supported by the research group GNAMPA of INdAM (Istituto Nazionale di Alta Matematica).

\bigskip
\noindent Maria Chiara Bertini, Dipartimento di Matematica e Fisica, Universit\`a di Roma ``Roma Tre'', Largo San Leonardo Murialdo 1, 00146, Roma, Italy. E-mail: \\ bertini@mat.uniroma3.it \\

\noindent Carlo Sinestrari, Dipartimento di Ingegneria Civile e Ingegneria Informatica, Universit\`a di Roma ``Tor Vergata'', Via Politecnico 1, 00133, Roma, Italy. 
E-mail: sinestra@mat.uniroma2.it

\end{document}